\def\beq{\begin{equation}}
\def\eeq{\end{equation}}
\theoremstyle{plain}
\newtheorem{theorem}[subsubsection]{Theorem}
\newtheorem*{theorem*}{Theorem}
\newtheorem{proposition}[subsubsection]{Proposition}
\newtheorem*{proposition*}{Proposition}
\newtheorem{lemma}[subsubsection]{Lemma}
\newtheorem*{lemma*}{Lemma}
\newtheorem*{fact*}{Fact}
\newtheorem{corollary}[subsubsection]{Corollary}
\newtheorem*{corollary*}{Corollary}
\newtheorem{conjecture}[subsubsection]{Conjecture}
\theoremstyle{definition}
\theoremstyle{remark}
\newtheorem{remark}[subsubsection]{Remark}
\newtheorem*{remarks}{Remarks}
\renewcommand{\comment}[1] {  }
\DeclareFontFamily{OT1}{rsfs}{}
\DeclareFontShape{OT1}{rsfs}{n}{it}{<-> rsfs10}{}
\DeclareMathAlphabet{\mathscr}{OT1}{rsfs}{n}{it}
\newcommand{\Ad}{\mathrm{Ad}}
\newcommand{\Res}{\mathrm{Res}}
\newcommand{\Z}{\mathbb{Z}}
\newcommand{\adele}{\mathbb{A}_k}
\newcommand{\adeleE}{\mathbb{A}_E}
\newcommand{\CC}{\mathbb{C}}
\newcommand{\PP}{\mathbb{P}}
\newcommand{\WW}{\mathbb{W}}
\renewcommand{\SS}{\mathbb{S}}
\newcommand{\Ind}{\operatorname{Ind}}
\newcommand{\Hom}{\operatorname{Hom}}
\newcommand{\Aut}{{\operatorname{Aut}}}
\newcommand{\Planch}{{\operatorname{Planch}}}
\newcommand{\Gm}{\mathbb{G}_m}
\newcommand{\GL}{\operatorname{GL}}
\newcommand{\PGL}{\operatorname{PGL}}
\newcommand{\SL}{\operatorname{SL}}
\newcommand{\Sp}{\operatorname{Sp}}
\newcommand{\Mp}{\operatorname{Mp}}
\newcommand{\SO}{{\operatorname{SO}}}
\newcommand{\Gal}{\operatorname{Gal}}
\newcommand{\tr}{\operatorname{tr}}
\newcommand{\diag}{{\operatorname{diag}}}
\newcommand{\ev}{\operatorname{ev}}
\newcommand{\disc}{{\operatorname{disc}}}
\newcommand{\temp}{{\operatorname{temp}}}
\newcommand{\Id}{\operatorname{Id}}
\newcommand{\oomega}{{\boldsymbol{\omega}}}
\newcommand{\LG}{{^LG}}
\begin{document}
\numberwithin{equation}{section}
\setcounter{tocdepth}{2}
\title[Howe duality and Euler factorization]{Plancherel decomposition of Howe duality and Euler factorization of automorphic functionals}
\author{Yiannis Sakellaridis}
\email{sakellar@rutgers.edu}
\address{Department of Mathematics and Computer Science, Rutgers University -- Newark, 101 Warren Street, Smith Hall 216, Newark, NJ 07102, USA. \medskip  \linebreak  and \medskip \linebreak
\hspace{1cm} Department of Mathematics,
School of Applied Mathematical and Physical Sciences,
National Technical University of Athens,
Heroon Polytechneiou 9,
Zografou 15780, Greece.}

\begin{abstract}
There are several global functionals on irreducible automorphic representations which are \emph{Eulerian}, that is: pure tensors of local functionals, when the representation is written as an Euler product $\pi = \otimes'_v \pi_v$ of local representations. The precise factorization of such functionals is of interest to number theorists and is -- naturally -- very often related to special values of $L$-functions.

The purpose of this paper is to develop in full generality the Plancherel formula for the Weil or oscillator representation, considered as a unitary representation of a reductive dual pair, and to use it in order to demonstrate a very general principle of Euler factorization: local factors are determined via the Langlands correspondence by a local Plancherel formula. This pattern has already been observed and conjectured in the author's prior work with Venkatesh in the case of \emph{period integrals}. Here, it is shown that the Rallis inner product formula amounts to the same principle in the setting of \emph{global Howe duality}.
\end{abstract}

\maketitle

\begin{flushright}
 \emph{To Roger Howe, \\ in admiration.}
\end{flushright}

\tableofcontents

\section{Introduction} 

The purpose of this paper is to develop the Plancherel formula for the Weil or oscillator representation, considered as a unitary representation of a reductive dual pair, and to show how known results on global Howe duality, more precisely Rallis' inner product formula (and its most recent extensions, in particular by Gan, Qiu and Takeda \cite{GQT}) can be reformulated as an identity between global and local functionals that generalizes a pattern already observed in the case of period integrals of automorphic forms.

Here is the general setup, which covers both period integrals and Howe duality (\emph{a.k.a}.\ theta correspondence, \cite{Howe}): One is given a reductive group $G$ over a global field $k$, and a unitary representation $\oomega$ of the adelic points $G(\adele)$, which is a restricted tensor product of local unitary representations $\oomega_v$. One is also given a suitable irreducible automorphic representation of $G$ with a factorization: \begin{equation}\label{pifactor}\pi= \bigotimes'_v \pi_v\end{equation} as a tensor product of irreducible representations of the local groups $G_v$, and an invariant pairing:
\begin{equation} \mathcal P: \pi \otimes \oomega^0 \to \CC\end{equation}
of ``global'' nature (i.e.\ given by some integral of automorphic functions over $[G]:=G(k)\backslash G(\adele)$), where $\oomega^0$ is a dense (Eulerian) subspace of smooth vectors in $\oomega$. For what follows we will ignore the difference between $\oomega$ and $\oomega^0$ in our notation, and formulate statements that hold for suitable dense subspaces of vectors. We denote by $\pi, \oomega$ etc.\ both the spaces of the representations and the corresponding actions of the group.

Examples of this setup include:
\begin{enumerate}
\item Period integrals over subgroups of $G$, which can equivalently be described as pairings:
$$ \pi\otimes \mathcal S(X(\adele))\to \CC,$$
where $X = H\backslash G$ is a homogeneous $G$-variety, and $\mathcal S$ denotes the space of Schwartz functions. Here $\oomega = L^2(X(\adele))$, and the pairing is given by:
\begin{equation}\mathcal P: \pi\otimes \mathcal S(X(\adele))\ni  \varphi \otimes \Phi \mapsto \int_{[G]} \varphi(g) \Sigma\Phi(g) dg,\end{equation}
where $ \Sigma \Phi$ denotes the automorphic function $\sum_{\gamma\in X(k)} \Phi(\gamma g)$, assuming that this integral converges.

\item The Weil representation $\oomega$, when $G=G_1 \times G_2$ denotes a dual pair, whereby for $\Phi\in \oomega^\infty$ and $\varphi_1\otimes\varphi_2$ in an automorphic representation $\pi=\pi_1\otimes\pi_2$, the pairing is given by the theta series of the pair $(\Phi,\varphi_1)$ integrated against $\varphi_2$, or vice versa. It can be written in the symmetric form:
\begin{eqnarray}\label{globaltheta}
 \mathcal P: \pi\otimes \oomega^\infty \ni \varphi_1\otimes \varphi_2\otimes \Phi \mapsto \nonumber \\ \int_{[G_1\times G_2]} \varphi_1(g_1) \varphi_2(g_2) \Sigma(\oomega(g_1,g_2)\Phi) d(g_1,g_2),
\end{eqnarray}
where $\Sigma$ denotes the standard automorphic functional on $\oomega^\infty$, provided these integrals converge.
\end{enumerate}

In general, not all integrals above are convergent, and one needs to suitably regularize them.

Suppose that, for some reason, one knows that the pairing $\mathcal P$ is Eulerian, that is: a pure tensor in the space
\begin{equation} \Hom_{G(\adele)}(\pi\otimes \oomega,  \CC) \simeq \bigotimes_v' \Hom_{G_v}(\pi_v\otimes \oomega_v,  \CC).\end{equation}
(This restricted tensor product also needs some qualifications, in general, depending on the subspace of vectors that one is considering.) This is automatically true in the multiplicity-one case, i.e.\ when the spaces 
$$\Hom_{G_v}(\pi_v\otimes \oomega_v,  \CC)$$ are (at most) one-dimensional, which is the case for Howe duality and for many spherical varieties. The question is, then, to describe an explicit factorization of $\mathcal P$ into local functionals. 

Such local factors will depend on how one fixes the isomorphism \eqref{pifactor}, hence there is a better hope of getting a meaningful answer if we double all variables, and consider the corresponding pairing $ \mathcal P\times\mathcal P$ on the pair $\pi\otimes\pi^\vee$, where $\pi^\vee$ is the dual representation of $\pi$, realized in a natural way on the space of automorphic representations. (Typically, $\pi$ will belong to $L^2([G])$ up to a character twist, and $\pi^\vee$ will be its complex conjugate up to the inverse twist.) The tensor product $\pi\otimes\pi^\vee$ has a canonical factorization:
\begin{equation}\label{pipifactor}\pi\otimes\pi^\vee= \bigotimes'_v (\pi_v\otimes\pi^\vee_v),\end{equation}
dictated by the preservation of the dual pairing, and so does the product of $\oomega$ with its dual (which, by unitarity, is just its complex conjugate). Therefore, we have a functional:
\begin{equation} \mathcal P\times \mathcal P: (\pi\otimes\pi^\vee) \otimes (\oomega\otimes\oomega^\vee)\to \CC,\end{equation}
which is invariant under $G(\adele)\times G(\adele)$ acting diagonally on the 1st and 3rd, resp.\ 2nd and 4th factors, and one is asking for an Euler factorization into local functionals with the analogous invariance property:
\begin{equation}\label{localfnls} (\pi_v\otimes\pi^\vee_v) \otimes (\oomega_v\otimes\oomega_v^\vee)\to \CC.\end{equation}

Equivalently, since the (admissible) dual of $\pi_v\otimes\pi^\vee_v$ is $\pi^\vee_v\otimes \pi_v$, we can view the global and local functionals as morphisms:
$$\oomega\otimes\oomega^\vee \to \pi^\vee\otimes\pi,$$
and because $\pi$ is assumed to be irreducible, these are determined uniquely by their composition with the canonical pairing
\begin{equation}\label{relchars}
J: \oomega\otimes\oomega^\vee \to \pi^\vee\otimes\pi \to \CC,\end{equation}
which we will call a \emph{relative character}. (It is a generalization of the notion of character, when $\oomega^0 = \mathcal S(H(\adele))$, the Schwartz space of a group $H$ under the $G=H\times H$-action.)

 If we denote by $J^\Aut$ (for ``automorphic'') the relative character that we get from the global pairing $\mathcal P$, we are seeking an Euler factorization:

\begin{equation}\label{EulerJ} J^\Aut = \prod_v J_v.\end{equation}

Such an Euler factorization was conjectured in \cite{SV} for the case of Eulerian period integrals, generalizing a conjecture of Ichino and Ikeda \cite{II} for the Gross-Prasad periods. The main idea is that the local relative characters $J_v$ of \eqref{EulerJ} will be provided by the \emph{Plancherel decomposition} of the unitary representations $\oomega_v$, assuming that this satisfies a \emph{relative local Langlands conjecture}. Namely, it is conjectured (and proven in several cases) that the Plancherel decomposition for $\oomega_v$ reads:
$$ \oomega_v = \int_{\widehat{G_{\oomega,v}}^\temp} \mathcal H_{\sigma} \mu_{\oomega,v}(\sigma),$$
where:
\begin{itemize}
\item the group $G_\oomega$ is a reductive group determined by the representation $\oomega$;
\item $\mu_{\oomega,v}$ is the standard Plancherel measure on the unitary dual of $G_{\oomega,v}=G_\oomega(k_v)$ (supported on the set of tempered representations);
\item The (possibly zero) Hilbert space $\mathcal H_{\sigma}$ is $\iota(\sigma)$-isotypic, where 
$$\iota: \widehat{G_{\oomega,v}}^\temp \to \widehat{G}_v$$ is a map from the tempered dual of $G_{\oomega,v}$ to the unitary dual of $G_v$ determined by a distinguished map of $L$-groups:
\begin{equation}\label{Lmap} {\LG_\oomega \times \SL_2 \to \LG}.\end{equation}
To be precise, the map of $L$-groups, including the ``Arthur'' $\SL_2$-factor  determines, by the Langlands and Arthur conjectures, a map $\phi\mapsto \Pi_{\iota(\phi)}$ from the set of tempered Langlands parameters into $\LG_\oomega$ to the set of (unitary) Arthur packets of $G_v$. Thus, in reality the above integral should be an integral over $L$-parameters into $\LG_\oomega$ (the actual group $G_\oomega$ plays no role), the measure $\mu_{\oomega,v}$ is the Plancherel measure on the set of those parameters (by conjectures of Hiraga-Ichino-Ikeda \cite{HII} it is well-defined, up to a small integer factor in the case of exceptional groups) and the space $\mathcal H_\sigma$ is isotypic for the set of representations in the Arthur packet $\Pi_{\iota(\phi)}$. However, for the purposes of the introduction we can ignore these fine differences.
\end{itemize}

Assuming this conjecture, for $\mu_{\oomega,v}$-almost every $\sigma$ we get a local relative character $J_{\sigma}^\Planch$ associated to the representation $\iota(\sigma)$, such that the Plancherel decomposition holds:
\begin{equation}\label{localPlancherel}
\left<\Phi_1, \Phi_2\right>_{\oomega_v} = \int_{\widehat{G_{\oomega,v}}^\temp} J_\sigma^\Planch(\Phi_1\otimes\bar\Phi_2) \mu_{\oomega,v}(\sigma).
\end{equation}
In other words, $J_\sigma^\Planch$ is pulled back from the inner product in the Hilbert space $\mathcal H_\sigma$.In practice, this relative character is continuous on the tempered dual of $G_{\oomega,v}$, and hence the $J_\sigma$'s are defined for every tempered $\sigma$.

Now assume that we are interested in the Euler factorization of the global relative character $J^\Aut$ for an automorphic representation $\pi$ which is a functorial lift $\iota(\sigma)$ of a tempered automorphic representation $\sigma$ of $G_\oomega$ via the above map of $L$-groups. (That is, $\pi$ belongs to the space of automorphic forms associated to the ``global Arthur parameter'' obtained by the ``global Langlands parameter'' of $\sigma$, if one can make sense of such global parameters, by composition with the above map.) Then the conjecture states, roughly, that up to a rational global factor which is missing from  \eqref{EulerJ}, the local factors $J_v$ are the ones given by the above Plancherel decomposition, that is:
\begin{equation} J_v = J_{\sigma_v}^\Planch.\end{equation}

The Euler product should be understood with the help of partial $L$-functions, as it will not converge, but almost all factors will be equal to a special value of an unramified $L$-function.

In the case of spherical periods, i.e.\ $\oomega=L^2(X(\adele))$ where $X$ is a (suitable) spherical variety, the $L$-group $\LG_\oomega$ is the $L$-group of that spherical variety, which can be defined in some generality -- s.\ \cite{SV} for split groups -- and is based on the dual group attached by Gaitsgory and Nadler \cite{GN}.

The purpose of this article is to unify the case of period integrals and the case of the theta correspondence, showing that the same principle of Euler factorization, as outlined above, is valid for the theta correspondence, as well. Here, the $L$-group and the map \eqref{Lmap} are provided by a conjecture of Adams \cite{Adams}. Essentially, if $G=G_1\times G_2$ is a dual pair, then $\LG_\oomega$ is the $L$-group of the ``smaller'' of the two, embedded diagonally in $\LG$; and the map from $\SL_2$ corresponds to a principal unipotent orbit in the commutator of its image. I give a more careful account of the appropriate conventions for $L$-groups for this case in \S \ref{ssLgroups}.

Of course, the Euler factorization of (the square of the absolute value of) the global Howe pairing \eqref{globaltheta} is known in most cases by the name of ``Rallis inner product formula'', and is a consequence of the Siegel-Weil formula developed   by the work of Rallis, Kudla, M{\oe}glin, Ichino, Jiang, Soudry, Gan, Qiu, Takeda and Yamana \cite{Rallis-oscillator, KR1,KR2,KR,Mo-theta,I1,I2,I3,JS,GT,GQT,Y1,Yamana}. In this article I restrict my attention to Howe duality for the so-called non-quaternionic, type $I$ dual pairs, i.e.\ symplectic-orthogonal or unitary. The goal of this article is to reinterpret the appropriate Rallis inner product formulas -- more precisely, those in the ``boundary'' and ``second term range'' in the sense of \cite{GQT} -- showing that the local factors of the Euler factorization of \eqref{globaltheta} are the ones of the local Plancherel formula \eqref{localPlancherel} for the Weil representation restricted to a dual pair.

To describe the contents of the paper in more detail, in section \ref{sec:Adams} I introduce the formalism of ``the $L$-group of Howe duality'' and Adams' conjecture  -- which are, however, only needed to establish the analogy with the ``relative Langlands program'' for spherical varieties, not for any of the results which follow. 

In section \ref{sec:local} I develop the Plancherel formula for Howe duality. It states (Theorem \ref{localtheorem}) that, if $G_1\times G_2$ is a dual pair (over a local field) with $G_2$ the ``small'' group, and if $\oomega$ denotes the oscillator representation of $G_1\times G_2$ (considered as a unitary representation) then $\oomega$ admits the Plancherel decomposition:
\begin{equation} \left<\Phi_1,\Phi_2\right> = \int_{\widehat{G_{2}}} J_\pi^\Planch (\Phi_1,\Phi_2)  \,\, \mu_{G_{2}}(\pi_2),
\end{equation}
where $\mu_{G_{2}}$ denotes Plancherel measure for $G_{2}$, and $\pi$ stands for the representation $\theta(\pi_2)\hat\otimes \pi_2$. The hermitian forms $J_\pi^\Planch$ are the ones explicitly defined by Jian-Shu Li in \cite{Li}, and  their positivity (proven under additional assumptions by Hongyu He in \cite{He}) is part of the theorem. The result is probably known to experts, and at least parts of it have appeared in the literature, cf.\ \cite{Howe-unitary, Gelbart, RS,OZ2,OZ1, GG}.

Finally, in section \ref{sec:global} I reformulate the Rallis inner product formula, in its most recent form as appearing in \cite{Yamana} and \cite{GQT}, in terms of the local hermitian forms $J_\pi^\Planch$ above. The main result (Theorem \ref{globaltheorem}) states that the relative characters \eqref{relchars} of global Howe duality admit a factorization as in \eqref{EulerJ}, whose local factors are precisely those hermitian forms.

It is by no means the first time that Howe duality and the theory of period integrals are being brought together; the theta correspondence has repeatedly been used to study period integrals, at least since the groundbreaking work of Waldspurger \cite{Waldspurger-Shimura,Waldspurger-torus}, and there has been a very systematically back-and-forth between these two methods in recent years around the program established by the Gan-Gross-Prasad conjectures \cite{GGP}. However, I am not aware of a uniform formulation of the principle of Euler factorization that was outlined above, and it may help in the understanding of some general principles underlying the theory of automorphic representations.

\subsection{Acknowledgements}

This paper is dedicated to Professor Roger Howe, in deep admiration of the  wealth of ideas that he has brought to the world of representation theory. 

The paper would not have been possible without the kind guidance of Wee Teck Gan who patiently answered my questions a few years ago and effectively guided me into the world of Howe duality. Moreover, the essential results that this paper is based on are all found in papers by him and his collaborators, which are in turn based on earlier work of Kudla, Rallis and others, building up on the groundbreaking ideas of Roger Howe.

I am also grateful to Jeff Adams for conversations on his conjecture on the Arthur parameters of Howe duality -- a conjecture which paved the way for unifying the theta correspondence with the Langlands program, based on the work of his and his collaborators in the real case.

Finally, I am indebted to the anonymous referee for a very prompt and comprehensive report, including several corrections and suggestions for improvement.

\section{Adams' conjecture and the $L$-group of Howe duality} \label{sec:Adams}

\subsection{Groups (overview)}

Fix a sign $\epsilon = \pm 1$, and a field $E$ of degree 1 or 2 over our base field $k$.

We will work in the context of Kudla-Rallis' Siegel-Weil formula, as generalized in \cite{GQT}. Thus, we will consider the theta correspondence between an (almost) arbitrary isometry group $G_1$ of an $\epsilon$-hermitian form on a vector space $V$, and the isometry group $G_2$ of an $(-\epsilon)$-hermitian form on a vector space $W$. When necessary, one of these groups will be replaced by its double metaplectic cover. Obviously, up to replacing $\epsilon$ by $-\epsilon$ (and except for a technical condition that we will eventually impose on the Witt rank of $V$ in order to apply known results on the Rallis inner product formula), the situation is symmetric in $V, W$ and we can interchange $G_1$ and $G_2$, but we will take $G_2$ to be the ``smaller'' of these groups.

More precisely, let $k$ be a number field, $E=k$ or a quadratic field extension, $\eta=\eta_{E/k}$ the (possibly trivial) quadratic idele class character of $k$ attached to $E$ by class field theory. The action of the Galois group of $E/k$ will be denoted by a bar ($\bar{~}$), with the understanding that it is trivial when $E=k$. We will sometimes use $F$ to denote a completion of $k$, in which case we will abuse notation (when no confusion arises) and denote again by $E$ the ring $E\otimes_k F$, and by $\eta_{E/F}$ the corresponding quadratic character of $F^\times$. The set of points of a variety $X$ over a completion $k_v$ will be denoted both by $X(k_v)$ and by $X_v$.

Fix a sign $\epsilon = \pm 1$, and consider a non-degenerate $\epsilon$-hermitian space $V$, that is: $V$ is a vector space over $E$ equipped with a non-degenerate $\epsilon$-hermitian form:
$$(v, w) = \epsilon\overline{(w,v)}.$$
Similarly, let $W$ denote a non-degenerate $(-\epsilon)$-Hermitian space of dimension $n$. 

We will denote by $G_1$ and $G_2$, respectively, certain central extensions of the isometry groups of $V$ and $W$ by  $\CC^1=\{z\in\CC^\times| \, |z| =1\}$. These covers split in most cases, so by abuse of notation we may also denote by $G_1$, $G_2$ the groups appearing in the following table:

\begin{equation}\label{table}\begin{array}{|r|c|c|c|c|c|}
\hline 
& G_1(V) & \check G_1 & \check G_2 & G_2(W) & d(n) \\
\hline (E:k)=2 & U_m & \GL_m & \GL_n & U_n & n \\
\epsilon = -1, \, n \mbox{ even} & \Sp_m & \SO_{m+1} & \SO_n & O_n & n-1 \\
\epsilon = -1, \, n \mbox{ odd} & \Mp_m & \Sp_m & \Sp_{n-1} & O_n & n-1 \\
\epsilon = 1, \, m \mbox{ even} & O_m & \SO_m & \SO_{n+1} & \Sp_n & n+1 \\
\epsilon = 1, \, m \mbox{ odd} & O_m & \Sp_{m-1} & \Sp_n & \Mp_n & n+1 \\
\hline
\end{array}\end{equation}

Here $\Mp_m$ denotes the double metaplectic cover of the symplectic group $\Sp_m$. (Our notation for symplectic groups uses $\Sp_m$ to denote the isometry group of a symplectic space of dimension $m$, so $m$ is even.)

The table above includes, besides the groups $G_1$ and $G_2$, their Langlands dual groups, i.e.\ the identity components of their $L$-groups. The value $d(n)$ is the value of $m$ corresponding to the \emph{boundary case}, that is: the case when the standard representations of $\LG_1$ and $\LG_2$ have the same dimension (if possible by parity restrictions). 

We will assume throughout (as we may without loss of generality, by symmetry) that $d(n)\le m$, i.e.\ $G_2$ (or, rather, its dual) is the smaller group, while $G_1$ is the larger one. 

The table above is provided for convenience of the reader -- however, there are many non-canonical choices that need to be made in order to identify the groups of Howe duality with the above groups, and their $L$-groups with the given $L$-groups. We are about to describe more canonical definitions, which will free us from the necessity to make such choices.

\subsection{Metaplectic group and the oscillator representation} \label{ssmetaplectic}

Fix an additive character $\psi: \adele/k\to \CC^\times$, and a factorization $\psi = \prod_v \psi_v$ into unitary characters of the completions $k_v$. Whenever no confusion arises, we will be using the same letter to denote the composition of $\psi$ with the trace map from $\adeleE$ to $\adele$, and for its restrictions to the various completions of $E$.

The space $V\otimes W$, considered as a vector space over $k$, has a natural symplectic structure. Restricting to a completion $k_v$, the associated Heisenberg group has a unique, up to isomorphism, irreducible representation $\oomega_v = \oomega_{v,\psi_v}$ where its center acts by $\psi_v$. This gives rise to a projective representation of $\Sp(V\otimes W)(k_v)$, and hence a representation of the group:
$$\GL(\oomega_v) \times_{\PGL(\oomega_v)} \Sp(V\otimes W)(k_v),$$
which is a central extension of $\Sp(V\otimes W)(k_v)$ by $\CC^\times$. This is the oscillator, or Weil, representation \cite{Weil}. Moreover, there is a canonical subextension by $\CC^1 = \{z\in \CC^\times| \, |z|=1\}$:
\begin{equation}\label{compactcovers} 1 \to \CC^1 \to \widetilde{\Sp}(V\otimes W)(k_v) \to \Sp(V\otimes W)(k_v) \to 1,
\end{equation}
characterized by the fact that it acts unitarily on $\oomega_v$. We will be working with $\CC^1$-extensions.

Let $G(V), G(W) \subset \Sp(V\otimes W)$ be the isometry groups of $V$, resp.\ $W$, and let $\tilde G_{1,v}$, $\tilde G_{2,v}$ be the preimages, in $\widetilde{\Sp}(V\otimes W)(k_v)$, of the images of $G(V)(k_v)$, resp.\ $G(W)(k_v)$ in $\Sp(V\otimes W)(k_v)$. These are central $\CC^1$-extensions of the corresponding classical groups. Note (as we will soon recall) that the isomorphism class of $\tilde G_{1,v}$ does not only depend on the space $V$, but also on $W$, and similarly for $\tilde G_{2,v}$.

Throughout this paper we will denote by $\oomega_v^\infty$ the smooth vectors of $\oomega_v$ with respect to the \emph{big} symplectic cover $\widetilde\Sp(V\otimes W)(k_v)$. The pull-back of $\oomega_v$ to $\tilde G_v:= \tilde G_{1,v} \times \tilde G_{2,v}$ gives rise to the theta correspondence (Howe duality). More precisely, for any irreducible representation $\pi$ of $\tilde G_{1,v}$ which occurs as a quotient of $\oomega_v^\infty$, there is a unique irreducible representation  $\theta(\pi)$ of $\tilde G_{2,v}$ such that $\pi\otimes \theta(\pi)$ occurs as a quotient, and vice versa when we interchange $G_{1,v}$ and $G_{2,v}$ \cite{Kudla-expo}. This was proven by Howe \cite{Howe} for Archimedean fields, by Waldspurger \cite{Waldspurger-Howe} for $p$-adic fields with $p\ne 2$, and by Gan and Takeda \cite{GT-Howe} for all $p$-adic fields.

The local theta correspondence is naturally a correspondence between \emph{genuine} representations of $\tilde G_{1,v}$, $\tilde G_{2,v}$, i.e.\ representations where the central $\CC^1$ acts by the identity character. 
 To translate this to a correspondence between representations of more classical groups, one needs to make some choices which give rise to splittings $G(V)_v\to \tilde G_{1,v}$, $G(W)_v\to \tilde G_{2,v}$, or at least (when this is not possible), splittings over the metaplectic double cover of $G(V)_v$, $G(W)_v$.

Such choices are described by Kudla in \cite{Kudla-splitting} and they are quite standard nowadays in the theory of Howe duality. There is a standard (described by Ranga Rao \cite{Rao}) set-theoretic splitting $\Sp(V\otimes W)(k_v)\to \widetilde{\Sp}(V\otimes W)(k_v)$ and an ensuing $2$-cocycle, valued in the group $\mu_8(\CC)$ of $8$-th roots of unity, which describes the $\CC^1$-extension $\widetilde{\Sp}(V\otimes W)(k_v)$. Given that, Kudla \cite[Theorem 3.1]{Kudla-splitting}, \cite[\S II.3]{Kudla-expo} describes explicit 1-cocycles $\beta_V: G(W)\to \CC^1$, $\beta_W: G(V)\to \CC^1$, depending on some choices, which trivialize the $2$-cocycle over $G(V), G(W)$, or over their double (metaplectic) covers. As the notation suggests, $\beta_V$ depends on $V$ and $\beta_W$ depends on $W$ (because the groups $\tilde G_{2,v}$, resp.\ $\tilde G_{1,v}$ do), and this dependence will also appear in the $L$-groups that we are about to define.

However, these choices are non-canonical and complicate the relationship between Howe duality and Langlands correspondence. Therefore, in this paper I will make the following convention: $G_{1,v}$, $G_{2,v}$ will, strictly speaking, denote the covering groups $\tilde G_{1,v}$, $\tilde G_{2,v}$ encountered above, and by ``representations'' of those groups we will always mean \emph{genuine} representations. In the next subsection, I will assign (non-standard) $L$-groups to these groups. I will also explain how certain choices give rise to usual $L$-groups, in parallel to Kudla's splitting of the covers. Following that, we will allow ourselves to abuse language and treat $G_{1,v}$, $G_{2,v}$ as a classical group or a double cover thereof, as in Table \ref{table}, when no confusion arises.

\subsection{$L$-groups} \label{ssLgroups}

We will adopt the following definitions of non-standard $L$-groups; notice that $L$-groups are defined globally, based on local considerations on covering groups. The definitions given here are compatible with the ones given by Adams \cite{Adams} in the Archimedean case, up to conjugacy (cf.\ Remark \ref{canonical-remark}). I will also explain how choices related to Kudla's cocycles correspond to choices that modify these $L$-groups into standard $L$-groups.

\begin{enumerate}
\item For $V$ a quadratic space of \emph{odd} dimension $m$ over $k$, we let 
\begin{equation}\label{Lqodd}\LG_1 = \Sp_{m-1}(\CC) \times \Gal(\bar k/k).\end{equation} Here the space $W$ is a symplectic space over $W$, and Kudla's cocycle $\beta_W$ giving rise to $G(V) \hookrightarrow \tilde G_{1,v}$ does not depend on any data of $W$. 

For $m$ odd we have: $O_m = \SO_m \times \Z/2$, and an irreducible representation for $O_m$ is given by an irreducible representation for $\SO_m$ and a sign for $\Z/2$. The $L$-group of $O_m$ should be identified with the $L$-group of $\SO_m$, with the sign of $\Z/2$ not affecting the $L$-parameter (or Arthur parameter) of a representation; this is compatible with results on the theta correspondence that will be recalled later (such as the results of Atobe-Gan \cite{AG}).


\item For $V$ a quadratic space of \emph{even} dimension $m$ over $k$, again the cocycle does not depend on any data of $W$, and we let: 
\begin{equation}\label{Lqeven}
\LG_1 =  O_m(\CC) \times_{\{\pm 1\}} \Gal(\bar k/k),\end{equation}
where $O_m$ maps to $\{\pm 1\}$ via the determinant and $\Gal(\bar k/k)$ maps via $\tilde\chi_V$, the (trivial or non-trivial) quadratic Galois character associated to the normalized discriminant of the quadratic space $V$. (``Normalized'' refers to the fact that for a split quadratic space it is a square, i.e.\ the normalized discriminant is the square class of $(-1)^{\frac{m}{2}}$ times the determinant of a matrix of the quadratic form.)

In the split case ($\tilde\chi_V =1$) this is just the direct product $\SO_m(\CC)\times\Gal(\bar k/k)$, but in the non-split case its quotient through the map $\Gal(\bar k/k)\to  \Gal(k(\sqrt{\disc(V)})/k)$ can be identified with $O_m(\CC)$. While this identification is customary, we should remind ourselves how it relates to the more ``standard'' version of the $L$-group of $\SO_m$ as a pinned semi-direct product:
\begin{equation}\label{LSO} \SO_m(\CC)\rtimes \Gal(k(\sqrt{\disc(V)})/k).\end{equation}  
We may fix a pinning for $\SO_m$. Any two such pinnings are conjugate by a unique element of $\SO_m(\CC)$, up to the center $\{\pm 1\}$ of the group. Identifying, then, the group $O_m(\CC)$ with the pinned semi-direct product $\SO_m(\CC)\rtimes \Gal(\bar k/k)$ depends on choosing an element $\epsilon \in O_m(\CC)\smallsetminus \SO_m(\CC)$ which acts by the outer automorphism on the pinning, and with $\epsilon^2 =1$. There are two inequivalent choices for such an $\epsilon$ (when $m>2$, whose quotient is the central $-1 \in \SO_m(\CC)$. \emph{We choose the $\epsilon$ which corresponds to a simple reflection} under the standard representation of $O_m(\CC)$, i.e.\ with eigenvalues $(1,1,\dots 1, 1, -1)$. This is the standard choice in the literature.

Finally, we mention that equivalence classes of Langlands or Arthur parameters for $O_m$ will be classes of parameters into the above $L$-group modulo the action of $O_m(\CC)$ by conjugation.

\item For $V$ a symplectic space of dimension $m$ (and hence $W$ a quadratic space, whose discriminant Galois character we will denote by $\tilde\chi_W$), the 1-cocycle $\beta_W$ depends on $W$, and only trivializes the cover if $\dim(W)$ is even; in the odd case, it reduces the cover $\tilde G_{1,v}$ to the double metaplectic cover $\Mp_m(k_v)$ of $\Sp_m(k_v)$. Hence, we distinguish two sub-cases:

\begin{itemize}
\item If $n=\dim(W)$ is even, we take:
\begin{equation}\label{Lseven}
\LG_1 =  O_{m+1}(\CC) \times_{\{\pm 1\}} \Gal(\bar k/k),\end{equation}
where $O_{m+1}$ maps to $\{\pm 1\}$ via the determinant and $\Gal(\bar k/k)$ maps via $\tilde\chi_W$.

Of course, this is still isomorphic to the direct product of $\SO_{m+1}(\CC)$ with $\Gal(\bar k/k)$, simply by multiplying by the diagonal of $\tilde\chi_W$. We view this operation at the level of $L$-groups as the analog of Kudla's cocycle, which identifies $\tilde G_{1,v}$ with $\CC^1 \times \Sp_m$. However, in view of the behavior of the theta correspondence in terms of Langlands parameters, it is better to adopt the above definition of $L$-group for $\tilde G_{1,v}$.

\item If $n = \dim(W)$ is odd, we take: 
\begin{equation}\label{Lsodd}
\LG_1 = \Sp_m(\CC) \times \Gal(\bar k/k),
\end{equation}
\emph{however}: this is \emph{not} the $L$-group of the double-cover metaplectic group $\Mp_m$ that is found in the literature. Namely, we recall from \cite[\S 4.3]{Weissman-split} that the choice of an additive character $\psi_v$ (and the standard choice of fourth root of unity $i\in \CC$) identifies the $L$-group of the metaplectic double cover $\Mp_m(k_v)$ with $\Sp_m(\CC)\times \Gal(\bar k/k)$. Kudla's cocycle (which depends on $W$) defines a splitting:
$$\Mp_m(k_v) \hookrightarrow \tilde G_{1,v}. $$
This splitting corresponds to an identification of $L$-groups: 
\begin{equation}\label{ident-symplectic}
\LG_1 = \Sp_m(\CC) \times \Gal(\bar k/k) \xrightarrow\sim {^L\Mp_m} = \Sp_m(\CC) \times \Gal(\bar k/k)
\end{equation}
\emph{given by multiplication by the quadratic character associated to $W$}, that is:
$$ (g,\sigma) \mapsto (\tilde\chi_W(\sigma) g, \sigma).$$

\end{itemize}

\item In the unitary case, Kudla's cocycle $\beta_W$ depends on the choice of a unitary character $\chi_W$ of (the local quadratic extension) $E^\times$ which extends the $n$-th power of the quadratic character $\eta$ associated to the extension $E/k_v$ (where, again, $n=\dim(W)$). Rather than making such a choice, we define the $L$-group $\LG_1$ as the inflation to $\Gal(\bar k/k)$ of the possibly non-split extension:
\begin{equation}\label{GLcover}
\widetilde{GL_m}^n := \left< \GL_m(\CC), \sigma | \sigma^2 = (-1)^n, \sigma g \sigma^{-1} = g^c \right>,
\end{equation}
where $\GL_m(\CC)$ is considered as a pinned group, and $g^c$ is the pinned Chevalley involution, in other words:
\begin{equation}
\LG_1 = \widetilde{GL_m}^n \times_{\Gal(E/k)} \Gal(\bar k/k),
\end{equation}
where, of course, $\widetilde{GL_m}^n$ maps to $\Gal(E/k)$ with $\sigma$ mapping to the non-trivial element.

The Langlands parameter of an idele class character $\chi_W$ of $E$ which extends the quadratic character $\eta_{E/k}^n$ is a homomorphism:
$$ \mathcal W_k \to {^L\Res_{E/k} \Gm} = (\CC^\times \times \CC^\times) \rtimes \Gal(E/k)$$
(where $\mathcal W_k$ denotes the Weil group of $k$) with the property that composing with the map:
$$ {^L\Res_{E/k} \Gm}  \to {^L\Gm} = \CC^\times \times \Gal(E/k)$$
given on connected components by $(z_1,z_2)\mapsto z_1 z_2$ we get the $n$-th power of the quadratic character of $\mathcal W_k$ associated to $E/k$. Such a parameter is necessarily of the form:
$$ w\mapsto (\tilde\chi_W ,\tilde\chi_W^\sigma),$$
where $\tilde\chi_W: \mathcal W_k \to \widetilde{GL_1}^n$ and $\tilde\chi_W^\sigma$ denotes its $\sigma$-conjugate in $\widetilde{GL_1}^n$. (I am confusing here maps to non-connected groups and their projections to the connected components, but the reader should have no difficulty discerning the meaning.) Thus, a choice of $\chi_W$ gives rise to an isomorphism:
\begin{equation}\label{LtoUnitary} \LG_1 \xrightarrow\sim \GL_m(\CC)\rtimes \Gal(\bar k/k)\end{equation}
(the latter being the $L$-group of the unitary group), namely \emph{multiplication by $\tilde\chi_W$}. We see this isomorphism as the identification of $L$-groups provided by the corresponding splitting:
$$ U_m(k_v) \hookrightarrow \tilde G_{1,v}$$
defined by Kudla. One readily checks from Kudla's formulas that multiplying a given $\chi_W$ by a character $\chi'$ of $U_1$ has the effect of multiplying the above splitting by $\chi'$ composed with the determinant, which is compatible with the modification of \eqref{LtoUnitary} that would ensue.
\end{enumerate}

This completes the description of $\LG_1$, and $\LG_2$ is defined in a completely analogous way, by interchanging the role of $V$ and $W$.

\begin{remark}
It would be interesting to see an account of how the covering groups $\tilde G_{1,v}$, $\tilde G_{2,v}$ might be seen as arising from the $K_2$-covers of Brylinski and Deligne \cite{BD}, and their $L$-groups above arising from the canonical $L$-groups attached to these covers by Weissman \cite{Weissman-Lgroups}.
\end{remark}

\subsection{The $L$-group and Arthur-$\SL_2$ of Howe duality}

From now on we denote by $G_{1,v}$, $G_{2,v}$ (or $G_1(k_v), G_2(k_v)$) the covering groups $\tilde G_{1,v}$, $\tilde G_{2,v}$, or their simplified versions of Table \eqref{table}, when no confusion arises. However, we \emph{do not} adopt the simplified versions of their $L$-groups appearing in that table, which depend on choices. Rather, we work throughout with the definitions of $\LG_1, \LG_2$ given in the previous subsection. We set $G_v = G_{1,v} \times G_{2,v}$, and $\LG = \LG_1 \times_{\Gal(\bar k/k)} \LG_2$.

From now on we assume throughout (as we may, by symmetry) that $d(n) \le m$, i.e.\ $G_2$ is the ``small'' group. We define \emph{the $L$-group of Howe duality} to be:
\begin{equation}
\LG_\oomega := \LG_2,
\end{equation}
endowed with a \emph{canonical morphism}:\footnote{S.\ Remark \ref{canonical-remark}.}
\begin{equation}\label{canonicalmorphism}
\LG_\oomega \times \SL_2 \to \LG,
\end{equation}
which is described as follows: To define the restriction of this map to $\LG_\oomega$, it is enough to define two $L$-morphisms (i.e.\ compatible with the quotient to $\Gal(\bar k/k)$: $\LG_\oomega\to\LG_1$ and $\LG_\oomega\to\LG_2$. The latter is taken to be the identity, and the former will be the ``natural'' morphism which will be described below. We then map $\SL_2$ to the centralizer of the image of $\LG_\oomega$ in the connected component of $\LG_1$.

Before we proceed to the description of the map 
\begin{equation}\label{Lembedding}\LG_\oomega \times \SL_2 = \LG_2 \times \SL_2 \xrightarrow{\gamma} \LG_1,\end{equation} let us assume it to recall Adams' conjecture, restricted to tempered representations of $G_2$. (Non-tempered representations of the small group will not play any role in this paper, and in any case the ``naive'' version of the conjecture  needs to be corrected in this case, see \cite[\S 7.1]{Moeglin-Adams}. 

\begin{conjecture}[Adams' conjecture]\label{Adamsconjecture}
The theta lift to $G_{1,v}$ of a tempered representation $\pi$ of $G_{2,v}$ with Langlands parameter $\phi$, if non-zero, belongs to an Arthur packet with Arthur parameter 
\begin{equation}
\phi': \mathcal W_{k_v}' \times \SL_2 \xrightarrow{\phi\times \Id} \LG_2\times \SL_2 \xrightarrow{\gamma} \LG_1,
\end{equation}
 where $\gamma$ is the morphism \eqref{Lembedding}. 
\end{conjecture}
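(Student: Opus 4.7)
The plan is to reduce the conjecture to the case of supercuspidal $\pi$ via the compatibility of the theta correspondence with parabolic induction---specifically Kudla's filtration of the Jacquet modules of $\oomega_v$---together with the compatibility of Arthur packets with parabolic induction. For supercuspidal tempered $\pi$, the strategy is to analyze the first occurrence of $\pi$ in the Witt tower of spaces opposite to $V$, using the doubling method: the local gamma factor $\gamma(s, \pi, \mathrm{std}, \psi)$ of Lapid--Rallis and Gan--Ichino encodes both the pole structure controlling first occurrence (via the regularized Siegel--Weil formula in \cite{GQT}) and the Langlands parameter $\phi$ of $\pi$ through the standard representation of $\LG_2$; matching the two establishes the boundary case.

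The key steps, in order, are: (1) Normalize the Kudla splitting characters $\chi_V, \chi_W$ of \S\ref{ssLgroups}, and identify, after this normalization, the conjectured parameter of the lift on $G_{1,v}$ with $\phi \oplus \tau_{m-d(n)}$, where $\tau_\ell = \mathbf{1} \boxtimes \mathrm{Sym}^{\ell-1}$ of $\mathcal W_{k_v}' \times \SL_2$ is the summand produced by the Arthur $\SL_2$-factor. (2) Verify the boundary case $m = d(n)$: here $\tau_0$ is trivial and the lift is tempered, so the claim reduces to the compatibility of the theta lift with Langlands parameters across dual pairs of equal standard dimension, which in the $p$-adic setting follows from the work of Atobe--Gan, Gan--Ichino, and M{\oe}glin. (3) Propagate up the Witt tower via Kudla's tower property, tracking how each jump in Witt rank inserts a summand of the symmetric power of the defining representation of the Arthur $\SL_2$, and confirming that the full centralizer of the image of $\phi$ in the connected component of $\LG_1$ contains the expected principal $\SL_2$.

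The main obstacle lies in step (3), combined with the genuinely non-tempered character of the lift when $m > d(n)$: one must show that the representations produced by Kudla's filtration at each level of the tower organize into a single Arthur packet with precisely the predicted principal $\SL_2$ in the centralizer of $\gamma(\phi)$, and not into some finer decomposition into distinct Langlands packets. This seems to require combining the Kudla--Rallis conservation relation (proven by Sun--Zhu) with M{\oe}glin's fine analysis of tempered theta lifts, together with the Arthur--Mok--M{\oe}glin construction of Arthur packets for classical and unitary groups. It is the case-by-case dependence on this last ingredient---and the absence of a completed Arthur-packet theory for metaplectic and non-quasi-split covering groups that naturally appear on one side of the dual pair---that prevents a uniform proof, consistent with the conjectural status of the statement.
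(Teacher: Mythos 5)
This statement is labeled a \emph{conjecture} in the paper (Conjecture~\ref{Adamsconjecture}), and the paper does not prove it. Instead it surveys the state of the art: cases over Archimedean fields are attributed to Adams, Barbasch, M{\oe}glin, and Paul, and in the non-Archimedean case the conjecture is said to be ``very close to being a theorem,'' summarized in \cite[Theorems 4.3 and 4.5]{AG} and depending on work of Atobe, Gan, Ichino, Savin, Mui\'c, and M{\oe}glin, with the remaining gap being to verify that certain representations in the ``going-up tower'' actually lie in the predicted Arthur packet --- an argument that has been carried out so far only for symplectic--even-orthogonal pairs (\cite[\S 8]{Moeglin-Adams}). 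So there is no ``paper's own proof'' to compare against.

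That said, your sketch is a reasonable rendering of the strategy that underlies the literature the paper cites, and you correctly conclude that no complete uniform proof is currently available. Your reduction to the supercuspidal case via Kudla's filtration of Jacquet modules together with compatibility of parabolic induction with parameters, the use of the doubling $\gamma$-factor to control first occurrence and match it with the standard $L$-parameter, and the explicit description of the predicted parameter as the twist of $\phi \oplus (\mathbf 1 \boxtimes \mathrm{Sym}^{m-d(n)-1})$ are all in line with the cited works. You also pinpoint the genuine obstruction essentially where the paper does: in the non-tempered range one must show the output organizes into a single Arthur packet with the expected principal $\SL_2$, and the Arthur-packet machinery needed for this is not yet in place for all the covering groups that arise (notably $\Mp$). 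Two small remarks: (a) the paper deliberately sidesteps your step (1) --- normalizing Kudla's splitting characters $\chi_V, \chi_W$ --- by introducing non-standard $L$-groups in \S\ref{ssLgroups}, precisely so that the morphism $\gamma$ can be stated without extraneous character twists; adopting that formalism would let you drop the twist-bookkeeping. (b) The Sun--Zhu conservation relation, which you invoke in step (3), is not part of the paper's discussion of this conjecture; it is a natural supplementary tool for first occurrence, but the paper's account routes entirely through the Atobe--Gan and M{\oe}glin results, which already internalize such inputs. Neither point affects your correct diagnosis that this is, at present, still a conjecture.
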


Here $\mathcal W_{k_v}'$ denotes the Weil (in the Archimedean case) or Weil-Deligne group (in the non-Archimedean case) of $k_v$.

This conjecture, for tempered representations, has been proven in several cases over Archimedean fields by Adams, Barbasch, M{\oe}glin and Paul \cite{Adams, AB-complex, Moeglin-Archimedien, Paul1, Paul2, Paul3}. In the non-Archimedean case, it is very close to being a theorem, summarized in \cite[Theorems 4.3 and 4.5]{AG}, based on results of Atobe, Ichino, Gan and Savin \cite{Atobe, AG, GI-GP, GS}  extending previous results of Mui\'c \cite{Mu1,Mu2,Mu3,Mu4} and M{\oe}glin \cite{Moeglin-Adams, Moeglin-multiplicite}. More precisely, the relevant cases of the aforementioned theorems are when $m \ge m_1$ (statements (2),(3),(4)) in Theorem 4.3, and statements (1), (2), (3) in Theorem 4.5. In all cases but the ``base case'' of the ``going-up tower'' (i.e.\ statements (1), (2) in Theorem 4.5) it is immediate to see that the Langlands parameters given by \cite{AG} are the ones of the ``main'' Langlands packet inside our desired Arthur packet. In the remaining cases, one needs to argue that the stated representations belong to our desired Arthur packet, as was done in \cite[\S 8]{Moeglin-Adams} for symplectic-even orthogonal pairs.

We now come to a description of the morphism \eqref{Lembedding}, commenting on its relation with the definitions of Adams \cite{Adams} and the results of \cite{AG}. In all cases, after we describe the map from $\LG_\oomega$, we will map $\SL_2$ to the centralizer of the image of $\LG_\oomega$ in the identity component of $\LG_1$, so that the non-trivial unipotent orbit in $\SL_2$ maps into the \emph{principal} unipotent orbit in this centralizer. Hence, it remains to describe the map from $\LG_\oomega$.

\begin{enumerate}
\item When $V$ is a quadratic space of odd dimension $m$, we embed: 
\begin{equation}
\LG_\oomega = \Sp_n(\CC) \times \Gal(\bar k/k) \to \LG_1= \Sp_{m-1}(\CC) \times \Gal(\bar k/k)
\end{equation}
by identifying the symplectic space associated to the former with a symplectic subspace of the latter. 

Recall, however, from \eqref{ident-symplectic} that in this case the $L$-group $\LG_1$ is not the ``standard'' $L$-group of the two-fold cover of the metaplectic group, but a twist of it by the discriminant character of $V$. Therefore, the map \eqref{Lembedding} induces a map on $L$-parameters that corresponds to the one of \cite{AG}, where a twist by $\chi_V^{-1}$ appears. (The character $\chi_W$ of \emph{loc.cit}.\ is in this case trivial.)

\item When $V$ is a quadratic space of even dimension $m$, we embed:
\begin{equation}
\LG_\oomega = O_{n+1}(\CC) \times_{\{\pm 1\}} \Gal(\bar k/k) \to \LG_1= O_m(\CC) \times_{\{\pm 1\}} \Gal(\bar k/k)
\end{equation}
by identifying the quadratic space of the first group with a subspace of the quadratic space of the latter. Notice that for both $L$-groups, $\Gal(\bar k/k)$ maps to $\{\pm 1\}$ via the determinant of $V$, so the morphism is well-defined. 

To compare with the results of \cite{AG}, notice that our choices for the $L$-group of $G_2$ coincide, while to identify the $L$-groups of $\LG_1$ one needs to multiply by the quadratic character $\tilde\chi_V$ -- this accounts for the factor of $\chi_V^{-1}$ in their formulas, while $\chi_W$ is again trivial.

\item When $V$ is symplectic, $W$ is orthogonal and we repeat what was done in the previous two cases, with the only difference that the `small'' $L$-group is the one of the orthogonal space. Notice that now the character $\chi_V$ in the results of \cite{AG} will be trivial, while the character $\chi_W$ accounts for our twisted definition of $L$-groups. 

\item When $V$ is unitary and the difference $m-n$ of the dimensions of $V$ and $W$ is an even number $2s$, in which case both covers $\widetilde{\GL_m}^n$ and $\widetilde{\GL_n}^m$ of \eqref{GLcover} are simultaneously split or non-split, and having the ``standard'' pinning of general linear groups in mind, we embed $\GL_n(\CC)$ into the ``middle'' block of $\GL_m(\CC)$. The element $\sigma= \sigma_n$ of $\widetilde{\GL_n}^m$ will go to the corresponding element $\sigma=\sigma_m$ of $\widetilde{\GL_m}^n$ when $m$ and $n$ are even (the extensions are split), and to the element:
\begin{equation} \left(\begin{array}{ccc} I_s \\ &I_n \\ &&-I_s \end{array}\right) \sigma_m,\end{equation}
as in \cite{Adams}, in the odd case, in order to make this map a homomorphism. When $m-n$ is odd, so exactly one of the two covers $\widetilde{\GL_m}^n$ and $\widetilde{\GL_n}^m$ is non-split, we embed, as in \cite{Adams}, the connected component $\GL_n(\CC)$ into the top left block of $\GL_m(\CC)$, and map $\sigma_n$ to the element:
\begin{equation} \left(\begin{array}{cc} & I_n \\ I_{m-n} \end{array}\right) \sigma_m.\end{equation}

A more linear-algebraic description of Langlands parameters into $\LG_1$, $\LG_2$ will make the above definitions appear more natural, and will clarify the relation with the results of \cite{AG}: It is easy to see that equivalence classes (i.e.\ $\GL_m(\CC)$-conjugacy classes) of Langlands parameters into $\widetilde{\GL_m}^n$ are conjugacy classes of Frobenius-semisimple homomorphisms
$$ \phi:\mathcal W_E'\to \GL_m(\CC)$$
with the property that there is a non-degenerate bilinear form $B$ on $\CC^m$ with 
\begin{equation}\label{B1} B(\phi(w) x, \phi(\tau w\tau^{-1}) y) = B(x,y),\end{equation}
and
\begin{equation}\label{B2} B(y, x) = (-1)^{m+n+1} B(x, \phi(\tau^2) y),\end{equation}
where $\tau$ is some (any) chosen element of $\mathcal W_F \smallsetminus \mathcal W_E$.

It is now easy to see that Langlands parameters into $\LG_2$ naturally give rise to (equivalence classes of) Langlands parameters into $\LG_1$, and that this map between equivalence classes corresponds to the map of $L$-groups described above. Moreover, the description of parameters in \cite{AG} is identical, except that the factor $(-1)^{m+n+1}$ of \eqref{B2} is replaced by $(-1)^{m+1}$. This corresponds to the twist by $\tilde\chi_W$ that is needed to pass from $\LG_1$ to the $L$-group of $U_m$, as in \eqref{LtoUnitary}, and a similar twist by $\tilde\chi_V$ is needed to pass from $\LG_2$ to the $L$-group of $U_n$; this explains the factors $\chi_V^{-1}\chi_W$ appearing in the formulas of \cite{AG}.
\end{enumerate}

This completes the description of the ``$L$-group of Howe duality'', endowed with a ``canonical'' morphism \eqref{canonicalmorphism}.

\begin{remark}\label{canonical-remark}
The morphism \eqref{canonicalmorphism} that we defined \emph{ad-hoc} appears to be the ``correct'' one for equivalence classes of Langlands parameters; hence, it is the ``correct'' one up to conjugacy by the connected component $\check G$ of $\LG$. This makes it less canonical than the connected component $\check G_X$ of the $L$-group of a spherical variety $X$ which, in \cite{SV}, was given with a morphism:
$$ \check G_X \times \SL_2 \to \check G$$
canonical \emph{up to conjugacy by the canonical maximal torus} (and completely canonical if we were working with pinned groups). It would be desirable, not only for aesthetic reasons, to have a more geometric definition of $\LG_\oomega$, together with an analog of the ``boundary degenerations'' of a spherical variety for Howe duality, that would allow us to pinpoint a more canonical map \eqref{canonicalmorphism} of $L$-groups.
\end{remark}

\subsection{A Plancherel-theoretic version of Adams' conjecture}


Now consider the unitary oscillator representation $\oomega_v$ at a place $v$ as a genuine, unitary representation of the dual pair $G_v = G_{1,v}\times G_{2,v}$. Throughout this paper, the ``unitary dual'' of these groups means the \emph{genuine} unitary dual. 

The abstract theory of the Plancherel formula tells us that there is a decomposition:
\begin{equation}\label{Plabstract}
\oomega_v = \int_{\widehat{G_v}} \mathcal H_\pi \mu_v(\pi),
\end{equation}
where $\mu_v$ is a measure on the unitary dual, and for an irreducible unitary representation $\pi$ of $G_v$, the unitary representation $\mathcal H_\pi$ is isomorphic to a sum of copies of $\pi$.

For more careful presentations of the Plancherel decomposition, including issues of measurability, I point the reader to \cite{BePl} and \cite[\S 6.1]{SV}. For the decomposition \eqref{Plabstract} to make sense, one needs to specify morphisms from a dense subspace $\oomega_v^0$ of $\oomega_v$ to the Hilbert spaces $\mathcal H_\pi$, and the decomposition is essentially unique, in the sense that the resulting measure on $\widehat{G_v}$ which is valued in the space of Hermitian forms on $\oomega_v^0$ is unique.

We will later see that in this case one can take $\oomega_v^0=\oomega_v^\infty$, or, in the language of \cite{BePl}, the decomposition is \emph{pointwise defined} on $\oomega_v^\infty$. Assuming this, for ($\mu$-almost) every irreducible unitary $\pi = \pi_1\otimes\pi_2$ of $G_v$, the morphism $\oomega_v^\infty \to \mathcal H_\pi$ of the Plancherel decomposition has to factor through a (semisimple) $\pi_v$-isotypic quotient of $\oomega_v^\infty$. The Howe duality theorem implies that $\pi_1$ and $\pi_2$ completely determine each other, and that the quotient is multiplicity-free; in the notation used before, $\pi_1 = \theta(\pi_2)$ and $\pi_2=\theta(\pi_1)$. (Notice that we are throughout omitting the dependence on the character $\psi_v$ from the notation.) Hence, we have a Plancherel decomposition of the form:
\begin{equation}\label{Plabstract2}
\oomega_v = \int_{\widehat{G_{2,v}}} \theta(\pi_2)\hat\otimes \pi_2 \,\, \mu_{2,v}(\pi_2),
\end{equation}
where $\mu_{2,v}$ is now some measure on the unitary dual of $G_{2,v}$ (the push-forward of the measure $\mu_v$ on $\widehat{G_v}$). The fact that the decomposition is pointwise defined on $\oomega_v^\infty$ will be shown in the context of the proof of Theorem \ref{localtheorem}, in \S \ref{sspflocalthm}.




We formulate the following unitary variant of Adams' conjecture. It is the analog of the ``relative local Langlands conjecture'' \cite{SV}[Conjecture 16.2.2] for the $L^2$-space of a spherical variety. Recall that we are assuming that $G_2$ is the ``small'' group, i.e.\ $m\ge d(n)$.

\begin{conjecture}\label{unitaryAdams}
There is a direct integral decomposition:
\begin{equation} \oomega_v = \int_{[\phi]} \mathcal H_\phi  \mu_{2,v}(\phi),\end{equation}
where:
\begin{itemize}
\item $[\phi]$ runs over isomorphism classes of local tempered (i.e., bounded) Langlands parameters into $\LG_\oomega =\LG_2$;
\item $\mu_{2,v}$ is in the natural class of measures on the set of such Langlands parameters;
\item $\mathcal H_\phi$ is isomorphic to a (possibly empty) direct sum of irreducible representations belonging to the Arthur packet associated to the composition:
\begin{equation}\label{composition} \mathcal W_{k_v}' \times \SL_2 \xrightarrow{\phi \times \Id} \LG_\oomega \times \SL_2 \to {^L(G_1 \times G_2)},\end{equation}
where the last arrow is the canonical morphism \eqref{canonicalmorphism}.
\end{itemize}
\end{conjecture}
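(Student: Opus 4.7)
The plan is to derive the conjecture from two inputs: an explicit Plancherel decomposition of $\oomega_v$ parametrized by the genuine tempered unitary dual of the ``small'' group $G_{2,v}$ (which is Theorem \ref{localtheorem}), and Adams' conjecture in its tempered form (Conjecture \ref{Adamsconjecture}), which identifies the Arthur parameter of the theta lift.

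To establish the Plancherel formula, for every tempered $\pi_2 \in \widehat{G_{2,v}}^{\temp}$ the explicit Hermitian form $J_{\pi_2}^{\Planch}$ of Jian-Shu Li \cite{Li} provides a $G_v$-invariant sesquilinear pairing on $\oomega_v^\infty$ that factors, by Howe duality, through the maximal $\pi_2$-isotypic quotient $\theta(\pi_2) \hat\otimes \pi_2$. Positivity of these forms -- which identifies them with genuine inner products on the irreducible summands -- follows from He's results \cite{He} in the cases he treats, and otherwise should be obtained by a doubling-type argument realizing $J_{\pi_2}^{\Planch}$ as an integral of positive matrix coefficients against a Godement--Jacquet-type section of the oscillator representation. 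The integral identity
\begin{equation}
\langle \Phi_1, \Phi_2 \rangle_{\oomega_v} = \int_{\widehat{G_{2,v}}^{\temp}} J_{\pi_2}^{\Planch}(\Phi_1, \Phi_2) \, \mu_{G_{2,v}}(\pi_2), \qquad \Phi_1,\Phi_2 \in \oomega_v^\infty,
\end{equation}
is then proven by showing that both sides are $G_v$-invariant Hermitian forms on $\oomega_v^\infty$ and agree on a single generating vector computed in a Schr\"odinger model, via the Harish-Chandra inversion formula applied to the matrix coefficients $g_2 \mapsto \langle \oomega_v(g_2)\Phi_1, \Phi_2 \rangle$ along $G_{2,v}$ (which lie in the Harish-Chandra Schwartz space of $G_{2,v}$ by oscillator decay estimates). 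Equivariance under $G_v$ then propagates the equality to all of $\oomega_v^\infty$, making the Plancherel decomposition pointwise defined on smooth vectors and supported on the tempered dual.

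To conclude the conjecture, I would re-index by Langlands parameters: tempered $\pi_2$'s correspond, up to packet multiplicities, to tempered parameters $\phi : \mathcal W_{k_v}' \to \LG_\oomega = \LG_2$, and $\mu_{G_{2,v}}$ pushes forward to a measure $\mu_{2,v}$ on isomorphism classes $[\phi]$ in the natural class. Adams' conjecture, applied to $\pi_2$ with parameter $\phi$, identifies $\theta(\pi_2)$ as a constituent of the Arthur packet attached to the composition \eqref{composition} via the canonical $L$-morphism \eqref{canonicalmorphism}; hence setting $\mathcal H_\phi := \theta(\pi_2) \hat\otimes \pi_2$ (summed over the appropriate packet fiber) gives the desired form. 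The main obstacle is the step of establishing the Plancherel identity \emph{pointwise} on $\oomega_v^\infty$ with the claimed tempered support, since one must rule out non-tempered residues; this relies crucially on the ``small $G_2$'' hypothesis $m \ge d(n)$, in whose absence $\oomega_v$ does contain non-tempered, even discrete, summands on the $G_2$-side. Archimedean smooth-topology technicalities and the remaining non-Archimedean cases of Adams' conjecture (handled by the Atobe--Gan and M{\oe}glin arguments recalled in the excerpt) form secondary obstacles but do not require new ideas.
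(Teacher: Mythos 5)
Your top-level strategy matches the paper exactly: Conjecture \ref{unitaryAdams} is not proven unconditionally, but is derived from Adams' conjecture \ref{Adamsconjecture} once one has a Plancherel decomposition of $\oomega_v$ whose measure is absolutely continuous with respect to the Plancherel measure of $G_{2,v}$ and which is pointwise defined on $\oomega_v^\infty$. You also correctly identify the two ingredients of that Plancherel theorem: Harish-Chandra Schwartz decay of matrix coefficients of $\oomega_v$ restricted along $G_{2,v}$ (this is where $m\ge d(n)$ enters, via the doubling interpretation of the coefficient as a degenerate principal series section), and the Harish-Chandra inversion formula.

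Where your sketch has a genuine gap is the positivity of the Hermitian forms $J_{\pi_2}^\Planch$ for \emph{every} tempered $\pi_2$, without which the inversion formula yields only a decomposition of the inner product into invariant forms, not an honest Plancherel decomposition into Hilbert spaces $\mathcal H_\phi$. You rightly note that He's result is restricted, but the ``doubling/Godement--Jacquet-type'' substitute you gesture at is unspecified and is not what the paper does. The paper's route (the unnumbered Proposition in the proof of Theorem~\ref{localtheorem}) is spectral: it introduces the ring $\mathfrak z(G_2)$ of tempered multipliers, realized as Schwartz functions $\hat z$ on the orbifold $\mathcal T$ of tempered parameters acting through the Harish-Chandra Schwartz algebra, applies the inequality $0\le \langle \Omega(z)\Phi,\Omega(z)\Phi\rangle = \int_{\widehat{G}^\temp} |\hat z(\pi)|^2 J_\pi(\Phi,\Phi)\,\mu_G(\pi)$, and uses continuity of $\pi\mapsto J_\pi(\Phi,\Phi)$ on $\mathcal T$ to deduce $J_\pi(\Phi,\Phi)\ge 0$ generically, then separates the finitely many inequivalent constituents in each induced packet. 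This is a new contribution of the paper (recorded as a corollary generalizing He and answering Li's question), not a citable input, so it cannot be deferred. As a minor point, the ``agree on a single generating vector'' step you mention is not needed: Harish-Chandra inversion applied to $F(g)=\langle\oomega_v(g)\Phi_1,\Phi_2\rangle$ and evaluated at $g=1$ gives the identity directly for all smooth $\Phi_1,\Phi_2$.
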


Notice that, by abuse of notation, we use the same symbol $\mu_{2,v}$ for the Plancherel measure on the unitary dual of $G_{2,v}$, and for a measure on the set of its tempered Langlands parameters. By ``class'' of the measure $\mu_{2,v}$, we mean, as in \cite{SV}, an equivalence class of measures that are absolutely continuous with respect to each other. Given a choice of additive character $\psi_v$, this equivalence class has a ``canonical'' representative which corresponds to the conjecture of Hiraga-Ichino-Ikeda \cite{HII} on formal degrees. For discrete parameters, this measure is:
\begin{equation} \mu_{2,v}(\{\phi\}) = \frac{1}{|\mathcal S_\phi^\sharp|}|\gamma(0, \phi, \Ad, \psi_v)|, \end{equation}
where $\mathcal S_\phi^\sharp$ is a finite group related to a centralizer of the parameter. Notice that the adjoint $\gamma$-factor appearing in the formula makes sense also for our non-standard versions of $L$-groups, and it remains unchanged under the isomorphisms with more ``classical'' $L$-groups discussed in \S \ref{ssLgroups}. 

It is clear that Conjecture \ref{unitaryAdams} follows immediately from Adams' conjecture \ref{Adamsconjecture}, once one knows that the Plancherel measure $\mu_{2,v}$ of the oscillator representation \eqref{Plabstract2} is absolutely continuous with respect to the Plancherel measure of the group $G_{2,v}$ -- we will prove this in Theorem \ref{localtheorem}, including the stated fact that the Plancherel decomposition is pointwise defined on $\oomega_v^\infty$. 

Given that, and choosing $\mu_{2,v}= \mu_{G_{2,v}}$, the Plancherel measure for $G_{2,v}$, in \eqref{Plabstract2} (determined by a choice of Haar measure on $G_{2,v}$, which will be done globally), and independently from Conjecture \ref{unitaryAdams}, we get canonical morphisms from \eqref{Plabstract2}:

\begin{equation} \oomega_v^\infty \to \theta(\pi_2)\hat\otimes\pi_2, \end{equation}
up to scalars of absolute value $1$, for $\mu_{2,v}$-almost every $\pi_2$. We will actually see that these morphisms are ``continuous'' in $\pi_2$, and hence well-defined (possibly zero) for every tempered $\pi_2$, and we will see that these morphisms factorize the square of the absolute value of the global theta pairing.

\section{Plancherel decomposition of the oscillator representation} \label{sec:local}

The discussion of $L$-groups and Conjecture \ref{unitaryAdams} in the previous section was formulated in such a way to establish the analogy with the theory of period integrals, but to proceed we do not need to invoke Langlands parameters.
 The following theorem implies Conjecture \ref{unitaryAdams} if one assumes Adams' conjecture \ref{Adamsconjecture}:

\begin{theorem}\label{localtheorem}
In the previous setting, there is a direct integral decomposition:
\begin{equation}\label{Plunitary} \oomega_v = \int_{\widehat{G_{2,v}}} \theta(\pi_2)\hat\otimes\pi_2  \,\, \mu_{G_{2,v}}(\pi_2),
\end{equation}
where $\mu_{G_{2,v}}$ denotes Plancherel measure for $G_{2,v}$ (depending on a choice of Haar measure on $G_{2,v}$), and it is understood that $\theta(\pi_2)$ can be zero.

The decomposition is pointwise defined on $\oomega_v^\infty$, and for almost all $\pi_2$ the hermitian form on $\oomega_v^\infty$ that is pulled back from the unitary structure of $\pi=\theta(\pi_2)\hat\otimes \pi_2$ is equal to:
\begin{equation}\label{JPlanch}
J_{\pi}^\Planch(\Phi_1\otimes\bar\Phi_2):= \sum_\varphi \int_{G_{2,v}/\CC^1} \left<\oomega_v(g) \Phi_1,\Phi_2\right>_{\oomega_v} \overline{\left<\pi_2(g) \varphi, \varphi\right>_{\pi_2}} dg,
\end{equation} 
where $\varphi$ runs over an orthonormal basis of $\pi_2$.
\end{theorem}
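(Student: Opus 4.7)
The plan is to combine Howe duality with Harish-Chandra's Plancherel formula for $G_{2,v}$: the former identifies the possible spectral components as pure tensors $\theta(\pi_2)\hat\otimes\pi_2$, while the latter provides both the measure $\mu_{G_{2,v}}$ and the inversion formula. The bridge between the two is the matrix coefficient map from $\oomega_v^\infty\otimes\overline{\oomega_v^\infty}$ into an appropriate Schwartz space on $G_{2,v}/\CC^1$.

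First I would invoke Howe duality (Howe in the Archimedean case; Waldspurger and Gan--Takeda in the non-Archimedean case) to conclude that every irreducible $G_v$-subquotient of $\oomega_v^\infty$ is of the form $\theta(\pi_2)\hat\otimes\pi_2$, appearing with multiplicity one. This pins down the qualitative shape of \eqref{Plunitary}; what remains is to identify the measure and the hermitian form.

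Next I would study the matrix coefficient map
\begin{equation*}
\mathcal M \colon \oomega_v^\infty\otimes\overline{\oomega_v^\infty} \to C^\infty(G_{2,v}/\CC^1), \qquad \Phi_1\otimes\bar\Phi_2 \mapsto \bigl(g\mapsto \langle \oomega_v(g)\Phi_1,\Phi_2\rangle_{\oomega_v}\bigr),
\end{equation*}
which is $G_{2,v}\times G_{2,v}$-equivariant and $G_{1,v}$-invariant under the diagonal action. The crucial analytic input is that, in the range $d(n)\le m$ and for $\Phi_1,\Phi_2$ smooth, $\mathcal M(\Phi_1\otimes\bar\Phi_2)$ lies in the Harish-Chandra Schwartz space $\mathcal C(G_{2,v}/\CC^1)$. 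This is classical in the stable range, but becomes delicate at the boundary $d(n)=m$, where it requires sharp asymptotic estimates on matrix coefficients of the oscillator representation, typically obtained through the Schr\"odinger or a mixed model adapted to a maximally isotropic subspace. This is where I expect the main difficulty to lie.

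Once the Schwartz property is secured, one applies Harish-Chandra's Plancherel inversion to $f := \mathcal M(\Phi_1\otimes\bar\Phi_2)$ and evaluates at the identity: $f(e)=\int_{\widehat{G_{2,v}}}\tr(\pi_2(f))\,\mu_{G_{2,v}}(\pi_2)$, and unwinding $\tr(\pi_2(f))$ as a sum over an orthonormal basis of $\pi_2$ of matrix coefficients integrated against $f$ reproduces exactly Li's expression \eqref{JPlanch}, yielding the pointwise identity
\begin{equation*}
\langle \Phi_1,\Phi_2\rangle_{\oomega_v} = \int_{\widehat{G_{2,v}}} J_{\pi_2}^\Planch(\Phi_1\otimes\bar\Phi_2)\,\mu_{G_{2,v}}(\pi_2),
\end{equation*}
so that the decomposition is pointwise defined on $\oomega_v^\infty$ with the stated measure. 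Finally, since each $J_\pi^\Planch$ is $G_v$-invariant, Howe duality and multiplicity one force it to factor through $\theta(\pi_2)\hat\otimes\pi_2$ and to be a scalar multiple of the pullback of the standard unitary pairing; this assembles the full direct integral decomposition \eqref{Plunitary}, and positivity of $J_\pi^\Planch$ follows automatically as the pullback of a Hilbert space inner product, independently of the partial results of He.
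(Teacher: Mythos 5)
Your overall plan tracks the paper's proof closely: establish that the restricted matrix coefficients of $\oomega_v^\infty$ lie in the Harish-Chandra Schwartz space $\mathscr C(G_{2,v})$, then feed them into Harish-Chandra's pointwise Plancherel inversion for $G_{2,v}$, and finally use Howe duality to identify the spectral components. The paper's proof of the Schwartz property is by a clean route (interpreting matrix coefficients of $\tilde\oomega$ as restrictions of sections of a degenerate principal series $I_{\PP}^{G(\WW)}(\chi\delta_{\PP}^{-1/2})$ with exponent $s_0=\tfrac{m-d(n)}{2}\ge 0$, then quoting the Gan--Ichino convergence lemma for doubling zeta integrals), rather than direct asymptotic estimates in a mixed model, but your outline is compatible with that.

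The genuine gap is the final step, where you assert that ``positivity of $J_\pi^\Planch$ follows automatically as the pullback of a Hilbert space inner product, independently of the partial results of He.'' This is circular. Harish-Chandra's inversion gives you only the scalar identity
$\left<\Phi,\Phi\right>_{\oomega_v}=\int_{\widehat{G_{2,v}}}J_{\pi}^{\Planch}(\Phi\otimes\bar\Phi)\,\mu_{G_{2,v}}(\pi_2)$;
it says nothing about the sign of the individual $J_\pi^\Planch$. Multiplicity-one (via Howe duality) shows that the $G_v$-invariant \emph{hermitian} form $J_\pi^\Planch$ is a \emph{real} scalar multiple of the inner product on the irreducible quotient $\theta(\pi_2)\hat\otimes\pi_2$ --- but a priori that scalar could be negative, and then you would not have a direct-integral decomposition of the Hilbert space $\oomega_v$. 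To say the form ``is the pullback of a Hilbert space inner product'' is to assume the conclusion you are trying to establish. This is precisely the nontrivial content: Li \cite{Li} raised positivity as a question, and He \cite{He} proved it only under additional hypotheses; obtaining it in full generality (for tempered $\pi_2$) is one of the points of the theorem.

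The paper closes this gap with an argument you are missing. First it cites Cowling--Haagerup--Howe \cite{CHH} to conclude that $\oomega_v|_{G_{2,v}}$ is a \emph{tempered} unitary representation (because diagonal matrix coefficients of a dense subspace lie in $L^{2+\epsilon}$). Temperedness lets one act by a ring of multipliers $\mathfrak z(G_{2})\simeq\mathscr C(\mathcal T)$ (the tempered Bernstein center / Schwartz multipliers on the orbifold $\mathcal T$ of tempered parameters). For any multiplier $z$ with Fourier transform $\hat z$, one computes
$0\le\left<\Omega(z)\Phi,\Omega(z)\Phi\right>=\int_{\widehat{G_{2}}^{\temp}}|\hat z(\pi)|^2\,J_\pi(\Phi,\Phi)\,\mu_{G_{2}}(\pi)$,
and then a localization argument --- choosing $\hat z$ with small support around an arbitrary $t\in\mathcal T$, together with the fact that $t\mapsto J_t(\Phi,\Phi)$ extends to a Schwartz function on $\mathcal T$ --- forces $J_t(\Phi,\Phi)\ge 0$ everywhere, and a final refinement along the fibers of $\widehat{G_2}^{\temp}\to\mathcal T$ yields positivity for every tempered $\pi$. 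Without some argument of this sort, your proof does not establish that the integral formula you derive is a bona fide Plancherel \emph{decomposition} of $\oomega_v$.
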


In other words, for $\Phi_1,\Phi_2 \in \oomega_v^\infty$ we have:
\begin{equation}
\left<\Phi_1,\Phi_2\right>_{\oomega_v} = \int_{\widehat{G_{2,v}}} J_\pi^\Planch(\Phi_1 \otimes \overline{\Phi_2}) \,\, \mu_{G_{2,v}}(\pi_2),
\end{equation}
where $\pi $ stands for $\theta(\pi_2)\hat\otimes\pi_2$, and the $J_\pi^\Planch$ are positive semi-definite hermitian forms (for $\mu_{G_{2,v}}$-almost all $\pi_2$).

In comparison to the abstract decomposition \eqref{Plabstract2}, this theorem specifies that the Plancherel measure for $\oomega_v$ is absolutely continuous with respect to the Plancherel measure on $G_{2,v}$, and determines the hermitian forms of the Plancherel decomposition. The result is probably known to experts, and at least parts of it have appeared in the literature -- s.\ \cite{Li} for the hermitian forms above and \cite{GG} for the determination of the measure in the ``stable range''; cf.\ also \cite{Howe-unitary, Gelbart, RS,OZ2,OZ1} for special cases.  
The proof of this theorem will be the goal of this section. For the rest of this section we omit the index $v$ from the notation, e.g.\ $\oomega=\oomega_v$, $G_2=G_{2,v}$, $\mu_{G_2} = \mu_{G_{2,v}}$ etc. At some points we may denote the fixed completion $k_v$ of our global field by $F$.

\begin{remark}
In the non-Archimedean case it has been proven by Yamana \cite[Lemma 8.6]{Yamana} (s.\ also \cite[Proposition 11.5]{GQT}) that the forms $J_\pi^\Planch$ are non-vanishing if and only if $\theta(\pi_2)\ne 0$. In particular, the support of Plancherel measure coincides with the (closure of the) set of tempered representations of $G_{2,v}$ which are distinguished by Howe duality. It is expected that this should also be true in the Archimedean case. (To emphasize again the analogy with spherical varieties, this is the analog of Theorem 6.4.1 in \cite{SV}.)

\end{remark}

\subsection{Growth of matrix coefficients of $\oomega$}

\begin{proposition}\label{inHCS}
The matrix coefficients of the oscillator representation, 
$$ g\mapsto  \left<\oomega(g) \Phi_1,\Phi_2\right>_\oomega,$$
for $\Phi_1, \Phi_2\in \oomega^\infty$, when restricted to $G_2$, lie in the Harish-Chandra Schwartz space $\mathscr C(G_2)$ of genuine functions on $G_2$.
\end{proposition}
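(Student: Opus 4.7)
The plan is to work in a Schr\"odinger model of $\oomega$ adapted to a Siegel parabolic of $G_2$, reducing via the Cartan decomposition to an explicit estimate on a maximal split torus. Choose a Witt decomposition $W = Y \oplus W_0 \oplus Y^*$ with $Y$ maximal isotropic and $W_0$ anisotropic, and form the Lagrangian $L = (V \otimes Y) \oplus L_0 \subset V \otimes W$, where $L_0$ is a Lagrangian of the symplectic subspace $V \otimes W_0$. Realizing $\oomega = L^2(L)$, we have $\mathcal S(L) \subset \oomega^\infty$; by density and smoothing it suffices to prove the Harish-Chandra Schwartz seminorm bounds on Schwartz vectors, together with their derivatives.

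Since $W_0$ is anisotropic, a maximal split torus $A_2 \subset G_2$ lies inside the $\mathrm{GL}(Y)$-factor of the Siegel Levi $\mathrm{GL}(Y) \times G(W_0)$, and $G_2 = K_2 A_2^+ K_2$ reduces the estimate to $A_2^+$. For $a \in A_2$, the element $a$ preserves $L$, acts on $V \otimes Y$ by $\mathrm{id}_V \otimes (a|_Y)$ with Jacobian $(\det(a|_Y))^{\dim V}$, and acts trivially on $L_0$; the Weil representation accordingly reads $\oomega(a)\Phi(x,x_0) = |\det(a|_Y)|^{-\dim V/2}\,\Phi(a^{-1}x, x_0)$. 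A change of variables $y = a^{-1}x$ and the Schwartz decay of $\Phi_2$ along $V \otimes Y$ yield, for every $N$, the bound $|\langle \oomega(a)\Phi_1,\Phi_2\rangle| \leq C_N\,|\det(a|_Y)|^{-\dim V / 2}(1 + \sigma(a))^{-N}$ in the archimedean case (and eventually compact support modulo the center of $G_2$ in the non-archimedean case, where $\Phi_i$ are compactly supported and locally constant).

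The comparison with $\Xi(a) \asymp \delta_B(a)^{-1/2} P(\sigma(a))$ on $A_2^+$ --- where $B \subset P_2$ is a Borel, $\delta_B(a) = \prod_i |a_i|^{2\rho_i}$ in the coordinates of $A_2 \hookrightarrow \mathrm{GL}(Y)$, and $P$ is a polynomial --- then requires the strict inequality $\dim V / 2 > \max_i \rho_i$, which is precisely what the hypothesis $m \geq d(n)$ encodes in each row of Table \ref{table}: in the symplectic case $\rho = (n/2, n/2-1, \ldots, 1)$ and $d(n) = n+1$; in the orthogonal cases $\max_i \rho_i = (n-2)/2$ or $(n-1)/2$ and $d(n) = n - 1$; similarly for unitary. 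The resulting exponential decay of $|\langle \oomega(a) \Phi_1, \Phi_2\rangle|/\Xi(a)$ in the length $\sigma(a)$ dominates any polynomial, giving the Harish-Chandra Schwartz seminorm estimate. Derivatives by elements of $U(\mathfrak g_2)$ (or convolutions by Hecke algebra elements in the non-archimedean case) act on either side of the matrix coefficient by replacing $\Phi_i$ with their smooth images in $\oomega^\infty$, so the estimates extend to all seminorms defining $\mathscr C(G_2)$.

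The main obstacle is bookkeeping: the explicit formula for the Weil action of the split torus and the matching of $d(n)$ with $\max_i \rho_i$ are conceptually straightforward but must be verified across all lines of Table \ref{table} (including the metaplectic and unitary cases, and non-split $W$); the convergence argument also differs slightly between archimedean and non-archimedean settings --- in the latter, matrix coefficients are in fact eventually supported on a bounded set modulo the center of $G_2$, a stronger conclusion.
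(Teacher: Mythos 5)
Your proposal takes a genuinely different route from the paper. The paper does \emph{not} estimate the matrix coefficient directly in a Schr\"odinger model for $G_2$. Instead, it invokes the doubling-method machinery: via Lemma \ref{ev0}, the function $g\mapsto \left<\oomega(g)\Phi_1,\Phi_2\right>$ is recognized as the restriction to $G_2\times 1$ of a section of the degenerate principal series $\Ind_{\SS}^{\widetilde{\Sp}(V\otimes\WW)}(\chi)$ on the doubled group, with $|\chi\delta_\PP^{-1/2}| = |\det|^{s_0}$ and $s_0 = (m-d(n))/2 \ge 0$ (Lemma \ref{chargrowth}); then the convergence estimate of Gan--Ichino for local doubling zeta integrals (which shows the restriction of such a section to $G_2$ is in $\mathscr C(G_2)$ when $s > -\tfrac{1}{2}$) is cited as a black box. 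Your approach --- Cartan decomposition, explicit Weil action of a maximal split torus $A_2\subset\GL(Y)$, and comparison with $\delta_B^{-1/2}$ --- is a from-scratch proof of the same analytic fact. What the paper's route buys is that the precise shape of Lemma \ref{ev0} is needed later anyway (to identify $J_\pi^\Planch$ with doubling zeta integrals), so the doubling reformulation does double duty; your route is more self-contained but requires the case-by-case $\rho$-bookkeeping you note.

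Two points in your argument need repair. First, the claimed bound $|\left<\oomega(a)\Phi_1,\Phi_2\right>|\le C_N\,|\det(a|_Y)|^{-\dim V/2}(1+\sigma(a))^{-N}$ is not a consequence of Schwartz decay: writing $F(a)=|\det a|^{-m/2}\int\Phi_1(a^{-1}x,x_0)\overline{\Phi_2(x,x_0)}\,dx\,dx_0$, the inner integral tends to $\Phi_1(0,x_0)\cdot\int\overline{\Phi_2}$ as $a\to\infty$ in $A_2^+$, so $F(a)\sim|\det a|^{-m/2}$ with no further polynomial savings from the integral. What saves the argument is exactly the strict inequality you identify at the end: $\max_i\rho_i < m/2$ makes $|\det a|^{-m/2}\delta_B(a)^{1/2}\le e^{-\epsilon_0\sigma(a)}$ decay exponentially, which absorbs the polynomial factor in $\Xi$. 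So you should drop the $(1+\sigma(a))^{-N}$ claim and let the exponential gap do the work; with the weaker correct bound $|F(a)|\le C|\det a|^{-m/2}$, your conclusion stands. Second, the reduction $G_2=K_2A_2^+K_2$ silently requires the torus estimate to hold uniformly as $\Phi_1,\Phi_2$ are replaced by $\oomega(k_2)\Phi_1$, $\oomega(k_1^{-1})\Phi_2$; since the Weil action of $K_2$ in your Schr\"odinger model involves partial Fourier transforms and quadratic-phase multipliers (not translations), you must appeal to continuity of $\oomega|_{K_2}$ on $\mathcal S(L)$ and compactness of $K_2$ to get uniform Schwartz seminorm bounds on these orbits --- standard, but it should be said. (There is also a small arithmetic slip: for $\SO_n$, $n$ odd as well as even, $\max_i\rho_i=(n-2)/2$, not $(n-1)/2$; this does not affect the conclusion.)
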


This is essentially \cite[Corollary 3.4]{Li} -- actually, its proof is contained in the proof of Theorem 3.2 of this paper. However, we shall formulate its proof in terms of ``doubling zeta integrals'' (and a convergence result for those due to Gan and Ichino), in order to establish facts that we need for our global application. It should be pointed out that the interpretation of local zeta integrals in terms of matrix coefficients was observed already in \cite{Li-nonvanishing}.

For a recollection of the notion of Harish-Chandra Schwartz space $\mathscr C(G_2)$ for the points of reductive algebraic groups, and more generally groups of ``polynomial growth'', see \cite{BePl}. It coincides with the space of all smooth functions which are integrable against $\Xi(g)\left(\log\Vert g\Vert\right)^N$ for all $N$ (and similarly for their derivatives under the universal enveloping algebra, in the Archimedean case), where $\Xi$ is the Harish-Chandra $\Xi$ function and $\Vert \cdot \Vert$ is any norm of polynomial growth on the group. The same definitions apply to functions $\CC^1$-covers of algebraic groups, and we denote by $\mathscr C(G_2)$ the space of \emph{genuine} Harish-Chandra Schwartz functions on $G_2$, i.e.\ those on which the central $\CC^1$ acts by the identity character.

The proposition will follow from interpreting integrals against matrix coefficients of $\oomega$ as local zeta integrals of the doubling method for the standard $L$-function, and invoking well-known results for that case. While doing so, we will be careful about canonically fixing certain isomorphisms, for later use. In particular, we will also be making some comments about global measures etc.

Consider the $(-\epsilon)$-hermitian space $\WW = W \oplus (-W)$, where $(-W)$ denotes the same vector space with opposite hermitian form, and the direct sum is an orthogonal one. The diagonal copy $W^\diag\subset \WW$ is an isotropic subspace, and the oscillator representation $\tilde\oomega$ associated to the symplectic space $V\otimes\WW$ and the additive character $\psi$ has a Schr\"odinger model:
\begin{equation}\label{Schr-diagonal} \tilde\oomega \simeq L^2(V\otimes \WW/\ell),\end{equation}
where $\ell$ is the Lagrangian $V\otimes W^\diag$. In the next subsection, we will recall that there is a canonical model up to canonical isomorphism for the oscillator representation, a canonical Haar measure (given the additive character $\psi$) on $V\otimes \WW/\ell$ and a distinguished isomorphism between the canonical model for $\tilde\oomega$ and $L^2(V\otimes \WW/\ell)$; hence, we can consider \eqref{Schr-diagonal} as a canonical isomorphism.

By the symplectic form, we can identify the quotient $V\otimes \WW/\ell$ with the $F$-linear dual $\ell^*$.
The space of smooth vectors is $\tilde\oomega^\infty = \mathcal S(\ell^*)$, the space of Schwartz functions. 
 The functional:
\begin{equation}\ev_0:\mathcal S(\ell^*) \ni \Phi\mapsto \Phi(0)\end{equation}
is an eigen-functional for the Siegel parabolic $\SS \subset \widetilde{\Sp}(V\otimes \WW)$ stabilizing $\ell$, with eigencharacter which we temporarily denote by $\chi$. Thus, by Frobenius reciprocity the functional defines a morphism:
\begin{equation}\label{toinduced}
\tilde\oomega^\infty \to \Ind_{\SS}^{\widetilde{\Sp}(V\otimes \WW)}(\chi),
\end{equation}
where $\Ind$ denotes \emph{unnormalized} induction.

The cover $\widetilde\Sp(V\otimes (-W))$ can canonically be identified with $\widetilde{\Sp}(V\otimes W)$ through an \emph{anti-genuine} involution (i.e.\ the central $\CC^1$ maps to $\CC^1$ through the inverse character). Thus, the oscillator representation associated to the symplectic space $V\otimes (-W)$ and the character $\psi$ is canonically identified with the dual $\oomega^\vee = \overline{\oomega} = \oomega_{\psi^{-1}}$. 

The embedding $\Sp(V\otimes W) \times \Sp(V\otimes W) \to \Sp(V\times \WW)$ gives rise to a morphism:
\begin{equation}
\widetilde{\Sp}(V\otimes W) \times \widetilde{\Sp}(V\otimes W) \to \widetilde{\Sp}(V\otimes \WW)
\end{equation}
which is genuine in the first copy and anti-genuine in the second. Restricting the oscillator representation of the big group, there is an isomorphism:
\begin{equation}\label{double} \oomega \hat \otimes \overline\oomega \xrightarrow{\sim} \tilde\oomega|_{\widetilde{\Sp}(V\times W) \otimes \widetilde{\Sp}(V\otimes W)}.\end{equation}
As we will recall in the next subsection, there is a canonical choice of a unitary such isomorphism, and it has the following property: For any maximal isotropic subspace $Y\subset V\otimes W$ whose linear dual $Y^*$ is endowed with a Haar measure, there are \emph{canonical} unitary isomorphisms with Schr\"odinger models:
\begin{equation}\label{can1}
\oomega \hat \otimes \overline\oomega \simeq L^2(Y^*)\hat\otimes \overline{L^2(Y^*)} 
\end{equation}
and
\begin{equation}\label{can2}
\tilde\oomega \simeq L^2(Y^* \oplus Y^*),
\end{equation}
and then \eqref{double} is the canonical isomorphism:\footnote{Throughout, a bar over a vector space denotes the same space with the conjugate $\CC$-action, and of course for $L^2$-spaces we have a linear isomorphism $L^2\to\overline{L^2}$ given by $\Phi \mapsto \bar\Phi$.}
\begin{eqnarray}\label{isomY} L^2(Y^*)\hat\otimes \overline{L^2(Y^*)} &=& L^2(Y^* \oplus Y^*), \\
\nonumber \Phi_1 \otimes \overline{\Phi_2} &\mapsto & \Phi(x,y) := \Phi_1(x) \Phi_2(y).
\end{eqnarray}
(And, similarly, subspaces of smooth vectors are identified with the corresponding spaces of Schwartz functions.)

Moreover:
\begin{lemma}\label{ev0}
For the canonical choices of isomorphisms \eqref{Schr-diagonal} and \eqref{double}, 
the composition of $\ev_0$ with the morphism \eqref{double} is the functional:
\begin{equation} \Phi_1\otimes\overline{\Phi_2} \mapsto \left<\Phi_1,\Phi_2\right>_\oomega.\end{equation}
\end{lemma}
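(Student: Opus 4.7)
The plan is to exploit the uniqueness, up to scalar, of a Lagrangian-invariant functional on an irreducible representation of the Heisenberg group of $V \otimes \WW$. By Frobenius reciprocity, $\ev_0 \colon \tilde\oomega^\infty \to \CC$ is, up to scalar, the unique functional on $\tilde\oomega^\infty$ on which the Heisenberg preimage of $\ell = V \otimes W^\diag$ acts trivially, since the characters of $\ell$ all evaluate to $1$ at the origin of $\ell^*$. My first step is to verify that the pairing $\Phi_1 \otimes \bar\Phi_2 \mapsto \langle \Phi_1, \Phi_2\rangle_\oomega$ on $\oomega \hat\otimes \bar\oomega$ is also invariant under the diagonal embedding $V \otimes W \hookrightarrow V \otimes \WW$ as $\ell$: for $v \in V\otimes W$, the corresponding Heisenberg element $(v, v)$ acts on $\oomega \hat\otimes \bar\oomega$ (via the genuine-times-anti-genuine central extension) as $\oomega(v) \otimes \overline{\oomega(v)}$, and by unitarity $\langle \oomega(v)\Phi_1, \oomega(v)\Phi_2\rangle_\oomega = \langle \Phi_1, \Phi_2\rangle_\oomega$. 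Hence under the isomorphism \eqref{double} this functional is $\ell$-invariant, and so it must agree with $\ev_0$ up to a scalar $c$, independent of the input.

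It remains to show $c = 1$, for which I will work in concrete Schr\"odinger models. Choose a polarization $W = Y \oplus Y^*$, giving $\oomega \simeq L^2(V \otimes Y^*)$ with the self-dual Haar measure for $\psi$, and the corresponding tensor-product model $\tilde\oomega \simeq L^2((V \otimes Y^*) \oplus (V \otimes Y^*))$ via the Lagrangian $V \otimes (Y \oplus Y) \subset V \otimes \WW$ and the identification \eqref{isomY}. In this model the inner product pairing pulls back to integration along the diagonal in $(V \otimes Y^*)^{\oplus 2}$. The two Lagrangians $V \otimes (Y \oplus Y)$ and $V \otimes W^\diag$ share the common subspace $V \otimes Y^\diag$; modulo this subspace, their respective images in the quotient of $V \otimes \WW$ are the dual Lagrangians $V \otimes Y$ and $V \otimes Y^*$. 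The intertwining operator between the two Schr\"odinger realizations is therefore a partial Fourier transform in this direction, and by the self-duality of the chosen Haar measure with respect to $\psi$ this transform carries integration along the diagonal to evaluation at the origin, producing $c = 1$.

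The main obstacle is bookkeeping the canonical normalizations. Both \eqref{Schr-diagonal} and \eqref{double} are canonical only once one fixes the Haar measures on the relevant dual Lagrangian spaces as the self-dual ones associated to $\psi$, and once the intertwining operators between different Schr\"odinger realizations of a single Heisenberg representation are normalized in a compatible, Fourier-theoretic way. The formal framework to be set up in the following subsection of the paper is precisely designed to enforce these compatibilities, so the Fourier-theoretic computation above will yield $c = 1$ without any further adjustment, and the lemma will reduce to the uniqueness statement together with the compatibility of the two canonical choices.
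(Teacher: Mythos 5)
Your argument is correct and takes a genuinely different route from the paper's proof. The paper's proof (carried out in \S \ref{sscomparison}) makes no appeal to uniqueness at all: it sets up the half-density formalism of Thomas, writes down the canonical intertwiner $\mathcal F_{\ell_1,\ell_2}$ explicitly via formula \eqref{intertwiner} for $\ell_1 = Y\oplus Y$ and $\ell_2 = Z^\diag$, and carries out the resulting integral by hand, arriving at $\Phi_2(0) = \int_{x\in X}\Phi_1((x,x))\,dx$, which is the desired inner product. You, by contrast, first observe that both $\ev_0$ and the pairing $\Phi_1\otimes\bar\Phi_2\mapsto\langle\Phi_1,\Phi_2\rangle_\oomega$ are $\ell$-invariant functionals on the irreducible Heisenberg representation $\tilde\oomega^\infty$ (the invariance of the inner product under the diagonal Heisenberg copy of $V\otimes W$, via $(v,v)\mapsto\rho(v)\otimes\overline{\rho(v)}$ and unitarity, is a clean observation the paper does not make explicit), so that the two functionals agree up to a scalar $c$; this isolates the entire content of the lemma in the normalization $c=1$. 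That reduction is a genuine conceptual gain. However, to pin down $c=1$ you invoke the same partial-Fourier-transform picture that the paper's explicit computation realizes, and you do not carry it out in detail; you defer to the paper's half-density formalism to "enforce the compatibilities." So in the end both routes rest on essentially the same technical ingredient — the canonical intertwiner between the two Schr\"odinger models — and the uniqueness step, while pleasant, does not remove the need for the explicit calculation. (Two minor points worth checking in a full write-up: the uniqueness of the $\ell$-invariant functional in the Archimedean case requires ruling out higher-order distributions supported at the origin, which works because $\ell$ acts by a full group of characters rather than just one; and your phrase ``carries integration along the diagonal to evaluation at the origin'' should be read with care about direction, since what one actually shows is $\ev_0\circ\mathcal F_{\ell_1,\ell_2} = \int_{\mathrm{diag}}$, i.e.\ the adjoint of the intertwiner sends $\delta_0$ to integration along the diagonal.)
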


For our current purposes an equality up to a scalar of absolute value one would be just as good; however, for our global application the exact equality is important, combined with the compatibility of the canonical isomorphisms with global models. This lemma is well-known, cf.\ \cite[p.182]{Li-nonvanishing}, but we will repeat its proof in the next subsection to clarify the canonical nature of various isomorphisms.

Assuming this lemma from now, we conclude:

\begin{corollary}
The matrix coefficients $g\mapsto \left<g \Phi_1,\Phi_2\right>_\oomega$,  lie in the space of the induced representation $\Ind_{\SS}^{\widetilde{\Sp}(V\otimes \WW)}(\chi)$, identified with the space of functions $\{f: \widetilde{\Sp}(V\otimes \WW) \to \CC | f(pg) =\chi(p) f(g)\,\, \forall p\in \SS, g\in \widetilde{\Sp}(V\otimes \WW)\} $, and restricted as functions to $\widetilde{\Sp}(V\otimes W) \times 1 \subset  \widetilde{\Sp}(V\otimes \WW)$. 
\end{corollary}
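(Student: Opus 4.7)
The plan is to obtain the corollary by assembling three ingredients already put in place just before its statement: the Frobenius reciprocity morphism \eqref{toinduced}, the canonical identification \eqref{double} of $\oomega\,\hat\otimes\,\overline\oomega$ with the restriction of $\tilde\oomega$ to the dual pair, and Lemma \ref{ev0}.

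First, given $\Phi_1,\Phi_2\in\oomega^\infty$, I view $\Phi_1\otimes\overline{\Phi_2}$ as an element $\Phi\in\tilde\oomega^\infty=\mathcal S(\ell^*)$ using \eqref{double}. Since $\ev_0$ is an $(\SS,\chi)$-eigen-functional, Frobenius reciprocity produces a function
$$f_\Phi:\widetilde{\Sp}(V\otimes\WW)\to\CC,\qquad f_\Phi(\tilde g) := \ev_0\bigl(\tilde\oomega(\tilde g)\Phi\bigr),$$
which by construction lies in $\Ind_{\SS}^{\widetilde{\Sp}(V\otimes\WW)}(\chi)$. This is the induced function that will turn out to restrict to the matrix coefficient.

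Next, I evaluate $f_\Phi$ at elements of the form $(g,1)$ with $g\in\widetilde{\Sp}(V\otimes W)$. Because \eqref{double} is an isomorphism of representations for the dual pair $\widetilde{\Sp}(V\otimes W)\times\widetilde{\Sp}(V\otimes W)$, genuine in the first copy and anti-genuine in the second, the image of $\Phi_1\otimes\overline{\Phi_2}$ under $\tilde\oomega(g,1)$ corresponds to $(\oomega(g)\Phi_1)\otimes\overline{\Phi_2}$. Lemma \ref{ev0} applied to this vector then yields
$$f_\Phi\bigl((g,1)\bigr) \;=\; \ev_0\bigl((\oomega(g)\Phi_1)\otimes\overline{\Phi_2}\bigr) \;=\; \langle\oomega(g)\Phi_1,\Phi_2\rangle_\oomega,$$
which exhibits the matrix coefficient $g\mapsto\langle\oomega(g)\Phi_1,\Phi_2\rangle_\oomega$ as the restriction of $f_\Phi$ to $\widetilde{\Sp}(V\otimes W)\times 1\subset\widetilde{\Sp}(V\otimes\WW)$, exactly as the corollary asserts.

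Because Lemma \ref{ev0} is taken for granted and Frobenius reciprocity is formal, there is no genuine obstacle: the corollary is a direct bookkeeping consequence of the preceding material. The only subtlety worth flagging is ensuring that \eqref{double} really is equivariant for the two copies of $\widetilde{\Sp}(V\otimes W)$ embedded into $\widetilde{\Sp}(V\otimes\WW)$ (with the correct genuine/anti-genuine behavior in the central $\CC^1$), so that the group element $(g,1)$ used on the ``doubled'' side corresponds to the action of $\oomega(g)$ on the first factor alone; this compatibility is part of the canonical structure of the Schrödinger model and is made explicit in the identifications \eqref{can1}--\eqref{isomY}.
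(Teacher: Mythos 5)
Your proof is correct and follows exactly the route the paper intends: combine the Frobenius‑reciprocity map \eqref{toinduced}, the equivariance of the doubling isomorphism \eqref{double} in the first factor, and Lemma \ref{ev0} to identify $f_{\Phi_1\otimes\overline{\Phi_2}}((g,1))$ with the matrix coefficient $\langle\oomega(g)\Phi_1,\Phi_2\rangle_\oomega$. The paper leaves this corollary without a written proof precisely because it is this immediate bookkeeping consequence of the preceding lemma and constructions.
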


Let now $G(\WW)$ denote the preimage of the isometry group of $\WW$ (considered as a subgroup of $\Sp(V\otimes \WW)$) in $\widetilde{\Sp}(V\otimes \WW)$, and $\PP$ the corresponding Siegel parabolic of $G(\WW)$ fixing the isotropic space $W^\diag$. The restriction of an element of $\Ind_{\SS}^{\widetilde{\Sp}(V\otimes \WW)}(\chi)$, again considered as a function, to $G(\WW)$ via the natural embedding $G(\WW)\to \widetilde{\Sp}(V\otimes\WW)$ belongs to the space of the induced representation $\Ind_{\PP}^{G(\WW)}(\chi|_{\PP})$. 
Using \emph{normalized} (unitary) induction now, which we will denote by $I_{\PP}^{G(\WW)}$, it follows that the matrix coefficients of $\oomega$, restricted to $G_2 \subset \widetilde{\Sp}(V\otimes W)$ are restrictions via $G_2\to G_2\times 1 \hookrightarrow G(\WW)$ of the normalized induced representation 
$$ I_{\PP}^{G(\WW)} (\chi|_{\PP}\cdot \delta_{\PP}^{-\frac{1}{2}})$$
of $G(\WW)$, where $\delta_{\PP}$ denotes the modular character of $\PP$. 

We will now determine the growth of the character $\chi\delta_{\PP}^{-\frac{1}{2}}$. It is more convenient here to work with classical groups, so consider the quotient $\PP\to \Res_{E/F}\GL_n$, where $\Res_{E/F}\GL_n$ is identified with the Levi quotient of the Siegel parabolic of the classical group $\Sp(\WW)$, in such a way that the canonical central cocharacter composed with the (left) adjoint action on the unipotent radical is positive. Notice that the absolute value of the genuine character $\chi$ factors through a character of $\Res_{E/F}\GL_n$ -- it is this character that the following lemma is referring to.

\begin{lemma}\label{chargrowth}
The absolute value of the character $\chi\delta_{\PP}^{-\frac{1}{2}}$, considered as a character of $\Res_{E/F}\GL_n$ as above, is:
\begin{equation}
|\chi\delta_{\PP}^{-\frac{1}{2}}| = |\det|^{s_0},
\end{equation}
where $|\cdot |$ denotes the absolute value of the field $E$, and
\begin{equation}\label{s0}
s_0= \frac{m-d(n)}{2} .
\end{equation}
\end{lemma}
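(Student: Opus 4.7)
The plan is to compute $|\chi|$ and $|\delta_\PP|$ separately on the Levi subgroup $\Res_{E/F}\GL_n$ of $\PP$, then combine. Both pieces are direct; the only real task is keeping the normalizations straight.

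For $|\chi|$: in the Schr\"odinger model $\tilde\oomega^\infty = \mathcal S(\ell^*)$ realized above, the Levi of the Siegel parabolic $\SS$ of $\widetilde{\Sp}(V \otimes \WW)$ covers $\GL_F(\ell)$, and for $\tilde g$ lying over $g \in \GL_F(\ell)$ the action on $\mathcal S(\ell^*)$ takes the form
\[ (\tilde\oomega(\tilde g)\Phi)(y) = \beta(\tilde g)\, |\det_F g|_F^{1/2}\, \Phi(g^T y), \]
where $\beta$ is a genuine character of absolute value one (pinned down by the metaplectic cover) and the factor $|\det_F g|_F^{1/2}$ is forced by the transformation law of Haar measure on $\ell^*$ under $y \mapsto g^T y$, so as to make $\tilde\oomega(\tilde g)$ unitary. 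Evaluating at $y=0$ yields $|\chi(g)| = |\det_F g|_F^{1/2}$. Now an element $a \in \Res_{E/F}\GL_n$ of the Siegel Levi of $\PP \subset G(\WW)$ corresponds to $g = 1_V \otimes a \in \GL_F(\ell)$ acting on $\ell = V \otimes_E W^\diag$; its $E$-linear determinant there is $(\det_E a)^m$, so its $F$-linear determinant is $N_{E/F}((\det_E a)^m)$, whose $F$-absolute value equals $|\det_E a|_E^m$ by the identity $|N_{E/F}(x)|_F = |x|_E$. Hence $|\chi(a)| = |\det_E a|_E^{m/2}$.

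For $|\delta_\PP|$: I check case by case, using that $G(\WW)$ is the split form of its type (since $\WW = W \oplus (-W)$ has maximal Witt index $n$). In the symplectic case ($\epsilon = 1$, so $W$ is symplectic) $G(\WW) = \Sp_{2n}$ and $\delta_\PP(a) = |\det a|^{n+1}$; in the orthogonal case ($\epsilon = -1$, so $W$ is orthogonal) $G(\WW) = O_{2n}$ is split and $\delta_\PP(a) = |\det a|^{n-1}$; in the unitary case ($E/F$ quadratic) $G(\WW) = U_{n,n}$ is quasi-split with Siegel Levi $\Res_{E/F}\GL_n$ and $\delta_\PP(a) = |\det_E a|_E^{n}$. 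Reading off $d(n)$ from table \eqref{table}, in each case $|\delta_\PP(a)| = |\det_E a|_E^{d(n)}$, and hence $|\delta_\PP^{-1/2}(a)| = |\det_E a|_E^{-d(n)/2}$. Combining with the preceding step gives
\[ \bigl|\chi \delta_\PP^{-1/2}\bigr|(a) = |\det_E a|_E^{(m-d(n))/2} = |\det a|^{s_0}, \]
as claimed.

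The only delicate point is the sign of the exponent $+\tfrac{1}{2}$ (versus $-\tfrac{1}{2}$) in the formula for $|\chi|$; I would derive it directly from unitarity of the Schr\"odinger model, as sketched above. As an independent sanity check, the resulting value $s_0 = \tfrac{m - d(n)}{2}$ is the well-known location of the center of symmetry for the doubling zeta integrals of Piatetski-Shapiro--Rallis, consistent with the Gan--Qiu--Takeda framework invoked below.
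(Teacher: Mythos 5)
Your proof is correct and follows essentially the same route as the paper: compute $|\chi|$ on the big Siegel Levi $\SS$ (getting $|\det|^{1/2}$), transfer through $\PP\to\SS$ by tensoring with $V$ and restricting scalars to pick up the exponent $m$, and then subtract the modular character exponent $d(n)$. The paper cites Rao's explicit Schr\"odinger-model formula for the $|\det|^{1/2}$ and treats the modular character computation as routine, whereas you derive the first from unitarity and carry out the second case by case; these are just different levels of detail for the same argument.
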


Notice that $s_0\ge 0$ according to our convention that $m\ge d(n)$.

\begin{proof}
This is well-known, and a fundamental fact for the theory of the Siegel-Weil formula, but for convenience of the reader we verify it again: by the explicit description of the Schr\"odinger model \cite[Theorem 3.5]{Rao}, the Siegel Levi $\SS$ of $\widetilde{\Sp}(V\otimes \WW)$ acts on the functional $\ev_0$ by a character $\chi$ with $|\chi|= |\det|^\frac{1}{2}$. The map $\PP\to \SS$ corresponds to tensoring the standard representation $\WW$ of the Levi of $\PP$ by $V$ (and changing scalars from $E$ to $F$), thus the determinant of the image is the $m$-th power of the determinant of (the Levi of) $\PP$. This explains the factor $|\det|^{\frac{m}{2}}$, while the factor $|\det|^{-\frac{d(n)}{2}}$ arises by calculation of the modular character of $\PP$. 
\end{proof}

Thus, to complete the proof of Proposition \ref{inHCS}, up to the proof of Lemma \ref{ev0}, it suffices to show that for any $f\in I_{\PP}^{G(\WW)} (\chi\delta_{\PP}^{-\frac{1}{2}})$, the function:
$$ G_2\ni g\mapsto f((g,1)),$$
where $g$ is embedded as above, lies in the Harish-Chandra Schwartz space. This follows from the local theory of the doubling method, more precisely from the following (or, rather, its proof):

\begin{lemma}
For any tempered genuine representation $\pi$ of $G_2$, any two smooth vectors $v_1 \in \pi$, $v_2\in \pi^*$, and any smooth vector $f\in I_{\PP}^{G(\WW)} (\chi')$, the local zeta integral:
\begin{equation}\label{localzeta} Z_F(v_1, v_2, f) := \int_{G_2/\CC^1} f((g,1)) \overline{\left< \pi(g) v_1, v_2\right>} dg\end{equation}
is absolutely convergent  if $|\chi'| = |\det|^s$ with $s>-\frac{1}{2}$.
\end{lemma}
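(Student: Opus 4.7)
The plan is to bound $|f((g,1))|$ via the Iwasawa decomposition on the big group $G(\WW)$, combine this with the standard tempered matrix coefficient bound on $\pi$, and reduce convergence to an elementary integral over the positive Cartan chamber of $G_2$.

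First, I would pick a good maximal compact $K \subset G(\WW)$ with $G(\WW) = \PP K$, and note that since $f$ is smooth (hence continuous) and $K$ is compact, $|f|$ is bounded on $K$ by some constant $C_0$. For $g \in G_2$, write $(g,1) = p(g) \cdot k(g)$ in Iwasawa form; the transformation rule for $I_{\PP}^{G(\WW)}(\chi')$ yields
\begin{equation*}
|f((g,1))| \;\le\; C_0 \cdot \delta_{\PP}^{1/2}(p(g)) \, |\det p(g)|^s .
\end{equation*}
Cartan-decomposing $g = k_1 a k_2$ inside $G_2$ (with $a$ in a positive chamber $A^+$) and noting that a maximal compact of $G_2\times G_2$ embeds into $K$, I reduce to estimating the right-hand side for $g = a$. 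This is the computation done classically in the doubling method (Kudla--Rallis, Li): one checks that
\begin{equation*}
\delta_{\PP}^{1/2}((a,1)) \;=\; \delta_{B_2}(a)^{1/2} \cdot \psi_0(a), \qquad |\det p((a,1))| \;=\; \prod_i |a_i|_E ,
\end{equation*}
where $\psi_0$ is bounded on $A^+$ and $a_i$ are the eigenvalues of $a$ on the standard representation.

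Next, I would invoke two standard estimates on $G_2$. Temperedness of $\pi$ (Cowling--Haagerup--Howe) gives
\begin{equation*}
|\langle \pi(g) v_1, v_2 \rangle| \;\le\; C_1 \, \Xi_{G_2}(g) \, (1+\sigma(g))^{N}
\end{equation*}
for some $N$, where $\sigma(g)=\log\|g\|$; and Harish-Chandra's estimate gives $\Xi_{G_2}(a) \le C_2 \, \delta_{B_2}(a)^{-1/2}(1+\sigma(a))^{N'}$ on the positive chamber. Integrating in Cartan coordinates $dg = c\,\delta_{B_2}(a)\,da\,dk_1\,dk_2$, the factor $\delta_{B_2}(a)$ exactly cancels the product $\delta_{B_2}^{1/2}(a) \cdot \delta_{B_2}^{-1/2}(a)$ coming from the two estimates, so convergence of \eqref{localzeta} reduces to convergence of
\begin{equation*}
\int_{A^+} \Bigl(\prod_i |a_i|_E^{s}\Bigr) \, (1+\sigma(a))^{N+N'} \, da ,
\end{equation*}
modulo bounded factors.

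Finally, parametrizing $A^+$ by $|a_1|\ge|a_2|\ge\cdots$ with the Haar measure becoming $\prod d^\times t_i$, this reduces to a product of one-dimensional integrals of the form $\int_1^\infty t^{2s+1}\,(1+\log t)^{N+N'}\,\frac{dt}{t}$ on each simple root direction --- the exponent $2s+1$ arises because each $a_i$ in a symplectic/orthogonal/unitary Cartan contributes through $|a_i|_E^{2s}$ after combining the various norm conventions with the half-sum of roots in the remaining Iwasawa factor. These integrals converge precisely when $2s+1 > 0$, i.e.\ $s > -\tfrac12$. The \textbf{main obstacle} is carrying out cleanly the explicit Iwasawa decomposition of $(a,1) \in G(\WW)$ and tracking all the modular characters so as to get the $-\tfrac12$ threshold exactly; this is case-specific across the table \eqref{table}, but in every case it is precisely the balance underlying the theory of the Siegel--Weil formula (and appears in the convergence results of Gan--Ichino that the proposition is meant to quote).
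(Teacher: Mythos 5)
The paper proves this lemma simply by citing \cite{GI-formal}[Lemma~9.5]; your attempt at a direct proof follows the standard strategy of that literature (Iwasawa decomposition on $G(\WW)$, Cartan decomposition on $G_2$, tempered estimate plus the Harish-Chandra $\Xi$-bound, reduce to a one-variable integral), so the overall skeleton is the right one. However, the exponent bookkeeping is handled incorrectly, and as written the sketch does not establish the threshold $s>-\frac{1}{2}$.

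The clearest error is the last step: $\int_1^\infty t^{2s+1}(1+\log t)^N\,\frac{dt}{t}=\int_1^\infty t^{2s}(1+\log t)^N\,dt$ converges precisely when $2s<-1$, i.e.\ $s<-\frac{1}{2}$ --- the opposite of what you assert. The intermediate estimates feeding into this are also off: with your convention $\delta_{B_2}(a)\ge 1$ on $A^+$ (needed for $J(a)\sim\delta_{B_2}(a)$ and $\Xi_{G_2}(a)\lesssim\delta_{B_2}(a)^{-1/2}$), the function $g\mapsto\delta_{\PP}^{1/2}(p((g,1)))$ is a matrix coefficient of a \emph{unitary} degenerate principal series restricted to $G_2$, hence must decay at a tempered rate, so the $\delta_{B_2}$-exponent in your expression for $\delta_\PP^{1/2}((a,1))$ must be negative, not $+\frac{1}{2}$; with $+\frac{1}{2}$ your ``exact cancellation'' actually leaves a residual $\delta_{B_2}(a)$ in the integrand, which is not integrable for merely bounded $\psi_0$. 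Likewise $|\det p((a,1))|\ne\prod_i|a_i|_E$: for $G_2=\SL_2$ and $a=\diag(t,t^{-1})$, the right-hand side is $1$, whereas an explicit $\PP$-Iwasawa computation in $\Sp_4$ gives $|\det p((a,1))|\sim|t|^{-1}$. Carrying out the $\SL_2$ case honestly (with $\delta_\PP=|\det|^{\,n+1}$, $\Xi(a)\sim|t|^{-1}$, $J(a)\sim|t|^2$) yields a total integrand $\sim|t|^{-\frac{1}{2}-s}$, giving convergence for $s>-\frac{1}{2}$ --- but the exponent is $\frac{1}{2}+s$, not $2s+1$. You are right to flag the explicit Iwasawa computation as the main difficulty, but the sketch has not actually carried it out, and the formulae you do write down are mutually inconsistent and inconsistent with the stated conclusion.
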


\begin{proof}
See \cite{GI-formal}[Lemma 9.5].
\end{proof}

The proof of this lemma in \emph{loc.cit}.\ actually shows that the function $g\mapsto f((g,1))$ is in the Harish-Chandra space $\mathscr C(G_2)$. By Lemma \ref{chargrowth}, and since we are assuming that $m\ge d(n)$, this applies to the induced representation $I_{\PP}^{G(\WW)} (\chi\delta_{\PP}^{-\frac{1}{2}})$, completing the proof of Proposition \ref{inHCS}, except for Lemma \ref{ev0} which will be proven in the next section.

\subsection{Comparison of Schr\"odinger models}\label{sscomparison}

Suppose that $Z$ is a symplectic space (over our fixed local field which we keep omitting from the notation), $\oomega$ the oscillator representation of $\widetilde\Sp(Z)$ determined by the additive character $\psi$. If $\ell_1,\ell_2$ are two Lagrangian subspaces, endowed with Haar measures $dx_1, dx_2$, then $\oomega$ can be realized on the spaces $L^2(\ell_1^*, dx_1)$ and $L^2(\ell_2^*, dx_2)$, and there is a unique up to a scalar of absolute value $1$ equivariant isometry:
\begin{equation}
\mathcal F_{\ell_1,\ell_2}: L^2(\ell_1^*, dx_1)\xrightarrow\sim L^2(\ell_2^*, dx_2).
\end{equation}

Following \cite{Thomas}, we will now describe more canonical Schr\"odinger models, replacing \emph{functions} by \emph{half-densities} valued appropriate \emph{line bundles}, and equipped with canonical isometries between them  (see also \cite{WWLi-masters} for a nice presentation, and \cite{Lion-Vergne} for an earlier account of canonical intertwining operators). 

Namely, for a Lagrangian subspace $\ell$, let $\mathcal H_\ell$ denote the space of $L^2$-half densities\footnote{I use ``densities'' for the sheaf of densities on our vector spaces, not just for the ``Haar'' densities of \emph{loc.cit}. Thus, our densities are functions on the vector spaces times the ``Haar'' densities of \emph{loc.cit} -- when they are valued in the trivial line bundle their squares are (arbitrary) measures on the vector spaces, and when they are valued in a hermitian line bundle the same is true for the squares of their absolute values.} on $Z/\ell$ valued in the hermitian line bundle whose sections are functions on $Z$ satisfying: \begin{equation}\label{conddensities} f(\lambda+ z) = \psi\left(\frac{\left<z,\lambda\right>}{2}\right) f(z)
\end{equation}
 for all $\lambda\in \ell$, $z\in Z$, where $\left< \, \, , \, \, \right>$ denotes the symplectic pairing. We will see below that there are canonical intertwiners between the spaces $\mathcal H_\ell$ for different choices of $\ell$, so we will henceforth consider them as \emph{canonical} models for the oscillator representation.

A choice of splitting of the quotient $Z\to Z/\ell$ trivializes this line bundle, and a choice of Haar measure $dx$ on $Z/\ell= \ell^*$ (identified through the symplectic pairing, $z\mapsto \left< z, \cdot\right>$) whose positive square root is a half-density, turns half-densities into functions, thus giving rise to a unitary isomorphism:
\begin{equation}\label{HtoL} \mathcal H_\ell \xrightarrow\sim L^2(\ell^*, dx).\end{equation}

Later, when we will return to this point for global purposes, there are canonical Haar measures on the adelic points of these Lagrangians, and any splitting of $Z\to Z/\ell$ over the global field $k$ will result in the same trivialization of the aforementioned line bundle over the adelic points of $Z/\ell$ (because $\psi$ is automorphic), hence we can apply the present discussion identifying functions with half-densities in a canonical way.

Now, for two such Lagrangians $\ell_1$, $\ell_2$, there is a canonical (given the character $\psi$) self-dual Haar measure $\mu_{\ell_1,\ell_2}$ on the symplectic space $(\ell_1 + \ell_2)/(\ell_1\cap\ell_2)$. Using the canonical isomorphism:
\begin{equation}
(\ell_1 + \ell_2)/(\ell_1\cap\ell_2) \simeq \ell_1/(\ell_1\cap\ell_2) \oplus \ell_2/(\ell_1\cap\ell_2),
\end{equation}
the square root $\mu_{\ell_1,\ell_2}^\frac{1}{2}$ is a half density on the space on the right. The formula:

\begin{equation} \label{intertwiner} \mathcal F_{\ell_1,\ell_2} (\phi) (y) = \int_{x\in \ell_2/\ell_1\cap\ell_2} \phi(x+y) \psi\left(\frac{\left<x,y\right>}{2}\right) \mu_{\ell_1,\ell_2}^\frac{1}{2}
\end{equation}
defines the canonical intertwiner: $\mathcal H_{\ell_1}\to \mathcal H_{\ell_2}$.

Now we consider special cases that were encountered in the previous subsection.

First of all, there is a canonical isomorphism:
$$ \oomega\hat\otimes\bar\oomega \xrightarrow\sim L^2(\ell^*,dx) \hat\otimes \overline{L^2(\ell^*, dx)}$$
for any Lagrangian $\ell$ whose dual is endowed with a Haar measure $dx$, determined by the condition that the unitary pairing between $\oomega$ and $\bar\oomega$ translates to the inner product on $L^2(\ell^*, dx)$. This is \eqref{can1}.

Secondly, if $Z$ is a symplectic space and $\Z = Z\oplus (-Z)$, and $\ell$ is the diagonal of $Z$, there is a canonical splitting $\Z/\ell \to \Z$ whose image is the first copy of $Z$, and it is endowed with the self-dual Haar measure with respect to the character $\psi$. This gives a canonical isomorphism of the oscillator representation for $\Z$ with $L^2(Z)$, which is \eqref{Schr-diagonal}.

In the same setting, if $Y$ is any Lagrangian of $Z$ whose dual is endowed with a Haar measure $dx$, then \emph{any} choice of isotropic complement $X$ to $Y$ in $Z$ (giving rise to a complement $X\oplus X$ to $Y\oplus Y$ in $\Z$) induces \emph{the same} isomorphism between the oscillator representation attached to $\Z$ and the space:
$$ L^2(Y^*\oplus Y^*, dx \times dx).$$
Indeed, a different choice of splitting for the quotient $Z\to Y^*=Z/Y$ differs by a linear map $\kappa: Y^*\to Y$. Hence, for a vector $v\in \mathcal H_{Y\oplus Y}$, its images $\Phi, \Phi'$ under the two resulting isomorphisms:
$$ \mathcal H_{Y\oplus Y} \xrightarrow\sim L^2(Y^*\oplus Y^*, dx \times dx)$$
satisfy:
$$ \Phi' (z) = \psi\left(\frac{\left<z,\kappa(z)\right>}{2} - \frac{\left<z,\kappa(z)\right>}{2} \right) \Phi(z) = \Phi(z),$$
the minus sign in the bracket appearing because the symplectic forms on $Z$ and $(-Z)$ are opposite. This establishes \eqref{can2}.

Now we specialize to the setting of Lemma \ref{ev0}, except that we denote our symplectic spaces by $Z, \Z$ instead of $V\otimes W$, $V\otimes\WW$. The oscillator representation associated to $Z$ will be denoted by $\oomega$, and that associated to $\Z$ by $\tilde\oomega$. Take any Lagrangian $Y\subset Z$, and choose a Haar measure on $Y^*$. Set $\ell_1 = Y\oplus Y $, $\ell_2 = Z^\diag $. 

Choose a Lagrangian complement $X$ of $Y$ (identified with $Y^*$ via the symplectic pairing), and a Haar measure $dx$. As discussed before, this identifies $\tilde\oomega \simeq L^2(X\oplus X, dx \times dx)$. Moreover, we also saw that the first copy of $Z$ in $\Z$ identifies $\tilde\oomega$ with $L^2(Z, dz)$, where $dz$ is the self-dual measure with respect to $\psi$. For $\phi_1 \in \mathcal H_{\ell_1}$ and $\phi_2 =\mathcal F_{\ell_1,\ell_2} \phi_1 \in \mathcal H_{\ell_2}$, the corresponding functions $\Phi_1 \in L^2(X\oplus X, dx \times dx)$, $\Phi_2\in L^2(Z,dz)$ are obtained by using Haar measures to turn half-densities into functions, and restricting to $X\oplus X$, resp.\ $Z\oplus 0$. 
More precisely, splitting the self-dual Haar measure $dz$ on $Z=X\oplus Y$ as $dx \times dy$, identifying the quotient $\ell_2/\ell_1\cap\ell_2$ with $X^\diag$, and applying formula \eqref{intertwiner} divided by the half-density $dz^{\frac{1}{2}}$ we get:
$$ \Phi_2(x_0+y_0) = \frac{\phi_2}{dz^{\frac{1}{2}}} ((x_0+y_0,0)) \xlongequal{\eqref{intertwiner}} $$
$$ \int_{x\in X} \phi_1((x+x_0+y_0, x)) \psi\left(\frac{\left<(x,x),(x_0+y_0,0)\right>}{2}\right) \frac{\mu_{\ell_1,\ell_2}^\frac{1}{2}}{dx_0^\frac{1}{2}dy_0^\frac{1}{2}}  \xlongequal{\eqref{conddensities}}$$
$$\int_{x\in X} \phi_1((x+x_0, x)) \psi\left( \frac{\left<(x+x_0,x),(y_0,0)\right>}{2}\right) \psi\left(\frac{\left<(x,x),(x_0+y_0,0)\right>}{2}\right) \frac{\mu_{\ell_1,\ell_2}^\frac{1}{2}}{dx_0^\frac{1}{2}dy_0^\frac{1}{2}} $$
$$ = \psi\left(\frac{\left<x_0,y_0\right>}{2}\right) \int_{x\in X} \phi_1((x+x_0, x)) \psi\left(\left<x,y_0\right>\right) \frac{\mu_{\ell_1,\ell_2}^\frac{1}{2}}{dx_0^\frac{1}{2}dy_0^\frac{1}{2}}.$$

Now, we can write $\mu_{\ell_1,\ell_2}^\frac{1}{2} = dx^\frac{1}{2} \times dy_0^\frac{1}{2}$, where we have identified $\ell_2/\ell_1\cap\ell_2$ with $X^\diag$ as before, and $\ell_1/\ell_1\cap \ell_2$ with $Y\oplus 0$. Moreover, $\phi_1((x+x_0,x)) = \Phi_1((x+x_0,x)) dx^\frac{1}{2} dx_0^\frac{1}{2}$. The last formula reads, then:
\begin{equation}
\Phi_2(x_0+y_0)  = 
\psi\left(\frac{\left<x_0,y_0\right>}{2}\right) \int_{x\in X} \Phi_1((x+x_0, x)) \psi\left(\left<x,y_0\right>\right) dx.
\end{equation}

In particular, for $x_0=y_0 = 0$ we obtain:
$$ \Phi_2(0) = \int_{x\in X} \Phi_1((x, x)) dx,$$
and if $\Phi_1$ is in the image of a vector of $\oomega\hat\otimes\bar\oomega$ as in \eqref{isomY}, we obtain the statement of Lemma \ref{ev0}.

\subsection{Plancherel decomposition of the oscillator representation}\label{sspflocalthm}

We are now ready to prove Theorem \ref{localtheorem}.

We have seen in Proposition \ref{inHCS} that the matrix coefficients of $\oomega^\infty$ lie in the Harish-Chandra Schwartz space of $G_2$. The theorem will now follow from the pointwise Plancherel decomposition for the Harish-Chandra Schwartz space of the group $G_2$, provided that we can show that the hermitian forms $J_\pi^\Planch$ of \eqref{JPlanch} are positive semi-definite. This has been proven by Hongyu He \cite{He} under some additional assumptions. Notice that the fact that the Plancherel measure for $\Omega$ is supported on the tempered dual of $G_2$, given that the diagonal matrix coefficients of a dense subspace lie in $L^{2+\epsilon}(G)$, is a theorem of Cowling, Haagerup and Howe \cite[Theorem 1]{CHH}. We will rely on and extend that theorem, using the Plancherel formula for the Harish-Chandra Schwartz space:

\begin{proposition} 
Let $\Omega$ be a unitary representation of (the points of) a reductive group $G$ over a local field, and $\Omega^0$ a dense, invariant subspace of (necessarily smooth) vectors whose matrix coefficients lie in the Harish-Chandra Schwartz space $\mathscr C(G)$. 

For all tempered representations $\pi$ consider the hermitian forms, depending on the choice of a Haar measure on $G$:
\begin{equation}\label{defJpi}
J_\pi(\Phi_1,\Phi_2) = \sum_v \int_{G} \left<\Omega(g) \Phi_1,\Phi_2\right>_\Omega \overline{\left<\pi(g) v, v\right>_\pi} dg,
\end{equation}
the sum running over an orthonormal basis of $\pi$.

Then $J_\pi$ is positive semi-definite for every tempered representation $\pi$ of $G$, and $\Omega$ admits the following Plancherel formula, pointwise defined on $\Omega^0$:
\begin{equation}\label{PlOmega} \left<\Phi_1,\Phi_2\right>_{\Omega} = \int_{\widehat{G}} J_\pi(\Phi_1,\Phi_2)  \mu_{G}(\pi),
\end{equation}
where $\mu_G$ denotes the Plancherel measure of $G$ corresponding to the choice of Haar measure in \eqref{defJpi}. 

\end{proposition}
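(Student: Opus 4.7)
The strategy is to reduce both claims to the pointwise Plancherel formula for the Harish-Chandra Schwartz space $\mathscr{C}(G)$ itself, applied to the matrix coefficient
\[
f(g) := \langle \Omega(g)\Phi_1, \Phi_2\rangle_{\Omega},
\]
which by hypothesis lies in $\mathscr{C}(G)$ and satisfies $f(1) = \langle \Phi_1,\Phi_2\rangle_{\Omega}$. Recall that the Harish-Chandra/Bernstein Plancherel formula states: for every $f\in\mathscr{C}(G)$ and $\mu_G$-a.e. tempered $\pi$, the operator $\pi(\check f)$ is of trace class, and
\[
f(1) = \int_{\widehat G^{\temp}} \operatorname{tr}\bigl(\pi(\check f)\bigr)\,d\mu_G(\pi),
\]
where $\check f(g) = f(g^{-1})$.

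First I would verify that $J_\pi(\Phi_1,\Phi_2) = \operatorname{tr}(\pi(\check f))$ for every tempered $\pi$. Using the unitarity identity $\overline{\langle \pi(g)v,v\rangle} = \langle \pi(g^{-1})v,v\rangle$ and the change of variables $g\mapsto g^{-1}$ in \eqref{defJpi}, the expression for $J_\pi$ becomes
\[
J_\pi(\Phi_1,\Phi_2) = \sum_v \int_G f(g^{-1})\langle \pi(g)v,v\rangle\,dg = \operatorname{tr}\bigl(\pi(\check f)\bigr),
\]
the absolute convergence of integrals and of the orthonormal-basis sum being a standard consequence of $f\in\mathscr{C}(G)$ combined with the tempered bound $|\langle \pi(g)v,v\rangle|\le C_v\,\Xi(g)(1+\log\|g\|)^N$ and the Hilbert-Schmidt structure of the Plancherel decomposition on $\mathscr{C}(G)$. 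Substitution into the Plancherel inversion then gives \eqref{PlOmega} pointwise on $\Omega^0$.

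Next, hermitian symmetry $J_\pi(\Phi_1,\Phi_2) = \overline{J_\pi(\Phi_2,\Phi_1)}$ is immediate from the defining formula \eqref{defJpi}. For the positivity of $J_\pi(\Phi,\Phi)$, I would rewrite each summand of the orthonormal-basis sum as
\[
\int_G \bigl\langle (\Omega\hat\otimes \bar\pi)(g)(\Phi\otimes\bar v),\;\Phi\otimes\bar v\bigr\rangle\,dg,
\]
recognizing the integrand as a diagonal matrix coefficient of the unitary representation $\Omega\hat\otimes\bar\pi$ of $G$ --- hence a positive-definite function on $G$. The Schwartz estimate on the first factor times the tempered estimate on the second bounds this integrand by roughly $\Xi(g)^2$ times an arbitrary inverse power of $\log\|g\|$, placing it in $L^1(G)$. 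Since a positive-definite function in $L^1(G)$ corresponds to a positive element of $C^*(G)$, its image under any unitary representation is a positive operator; applied to the trivial representation this gives precisely that its integral over $G$ is non-negative. Summing over the orthonormal basis preserves non-negativity, so $J_\pi(\Phi,\Phi)\ge 0$.

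The main obstacle is securing the Harish-Chandra/Bernstein Plancherel inversion and the trace-class properties of $\pi(\check f)$ in our precise setting of $\CC^1$-covers of $F$-points of reductive groups. These are not literally $F$-points of reductive algebraic groups, but they fall within Bernstein's broader framework \cite{BePl} for groups of polynomial growth, so the inversion and the associated operator-theoretic inputs carry over. Once these technical prerequisites are in hand, both the identity \eqref{PlOmega} and the positivity of $J_\pi$ follow as above.
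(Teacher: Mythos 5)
Your derivation of the inversion formula \eqref{PlOmega} is correct and matches the paper's approach: both you and the author identify $J_\pi(\Phi_1,\Phi_2)$ with $\tr(\pi(\check f))$ for the matrix coefficient $f(g)=\langle\Omega(g)\Phi_1,\Phi_2\rangle_\Omega$, and then apply the pointwise Harish-Chandra/Plancherel inversion for $\mathscr{C}(G)$ evaluated at the identity. That part is fine.

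The positivity argument is where there is a genuine gap. You invoke the claim that ``a positive-definite function in $L^1(G)$ corresponds to a positive element of $C^*(G)$,'' and from this you apply the trivial representation to conclude $\int_G\phi(g)\,dg\geq 0$. This conflates two different notions: a continuous positive-definite function $\phi$ gives rise to a \emph{positive linear functional} (a state) on $C^*(G)$ via $f\mapsto\int_G f(g)\phi(g)\,dg$, but that is not at all the same as $\phi$, viewed as an element of $L^1(G)\subset C^*(G)$, being a \emph{positive element} of $C^*(G)$ (i.e.\ of the form $b^*b$, equivalently $\rho(\phi)\geq 0$ for every $*$-representation $\rho$). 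No standard theorem gives the latter from the former, and there is a structural obstruction precisely in the case at hand: your $\phi$ is a tempered-type function (indeed a matrix coefficient of a representation weakly contained in the regular representation), so it naturally lives in the reduced algebra $C^*_r(G)$, and the trivial representation of a non-amenable reductive $p$-adic or real group does \emph{not} factor through $C^*_r(G)$. Even if one verifies positivity of $\phi$ in $C^*_r(G)$, one cannot evaluate the trivial representation. Thus the step ``applied to the trivial representation, its integral is non-negative'' is unjustified. It is worth noting that the paper explicitly flags the positivity of these forms as the nontrivial content of the theorem (``their positivity\ldots is part of the theorem''), previously established by Hongyu He only under additional assumptions; if a soft $C^*$-algebraic argument like yours sufficed, this would have been folklore long ago.

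The paper's route to positivity is quite different and avoids this issue entirely by staying within the tempered dual. It first observes that $\Omega$ is tempered (by Cowling--Haagerup--Howe), then brings in the commutative ring $\mathfrak{z}(G)$ of multipliers (the tempered analogue of the Bernstein center, realized as Schwartz functions on the orbifold $\mathcal{T}$ of tempered infinitesimal characters). For a $K$-finite $\Phi\in\Omega^0$ and $z\in\mathfrak{z}(G)$ one computes
\[
0\le \|\Omega(z)\Phi\|^2 \;=\; \int_{\widehat G^{\temp}} |\hat z(\pi)|^2\, J_\pi(\Phi,\Phi)\,\mu_G(\pi),
\]
using the Plancherel formula for $\mathscr{C}(G)$. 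Since $\hat z$ can be chosen to concentrate near any point of $\mathcal{T}$, and $t\mapsto J_t(\Phi,\Phi)$ extends to a continuous (Schwartz) function on $\mathcal{T}$, one deduces $J_t(\Phi,\Phi)\geq 0$ for every $t$; finally, a short argument using generic irreducibility and the linear independence of the inequivalent constituents over a non-generic $t$ upgrades this to $J_\pi(\Phi,\Phi)\geq 0$ for \emph{every} tempered $\pi$. No appeal to the trivial representation or to positivity of elements of $C^*(G)$ is needed. You should replace your $C^*$-algebraic argument with something along these lines (or supply a correct proof of your claimed fact, which I do not believe is available).
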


The proposition immediately extends to genuine functions on $G_2$, and the notion of Harish-Chandra space for those discussed after the statement of Proposition \ref{inHCS}. This is where we will apply it, but for notational simplicity we present the proof for classical groups.

\begin{proof}

The formula \eqref{PlOmega} follows from the pointwise Plancherel decomposition for the Harish-Chandra Schwartz space of the group $G$ \cite{HC,WaPl}: Any Harish-Chandra Schwartz function $F$ on $G$ has the pointwise decomposition:
\begin{equation}
F(g) = \int_{\widehat{G}} F_\pi(g) \mu_G(\pi)
\end{equation}
with 
\begin{equation}F_\pi(x) = \tr(\bar\pi(\rho(x) F dg)) =  \sum_v \int_{G(F)} F(gx) \overline{\left<\pi(g) v, v\right>_\pi} dg,\end{equation}
where $\rho$ denotes the right regular representation. In particular, when 
$F$ is the matrix coefficient $F(g) = \left<\Omega(g) \Phi_1,\Phi_2\right>$, 
one has
$$F_\pi(1) = J_\pi (\Phi_1 , \Phi_2).$$

 We just need to explain why \eqref{PlOmega} is a Plancherel formula for our representation $\Omega$, which is equivalent to saying that the forms $J_\pi$ on $\Omega^0$ are positive semi-definite; indeed, once this is established, the Hilbert spaces of the abstract Plancherel formula \eqref{Plabstract} are obtained by completion.
 
From \cite[Theorem 1]{CHH}, we know that $\Omega$ is a tempered representation. There are various equivalent characterizations of this property:
\begin{enumerate}
\item The support of Plancherel measure for $\Omega$ lies in the tempered dual $\hat G^\temp$ of $G$.
\item Every diagonal matrix coefficient of $\Omega$ can be approximated, uniformly on compacta, by diagonal matrix coefficients of the right regular representation $\rho$ of $G$.
\item For every $f\in L^1(G)$, we have an inequality of operator norms:
\begin{equation}\label{operatornorms}
\Vert \Omega(fdg) \Vert \le \Vert \rho(fdg)\Vert.
\end{equation}
\end{enumerate}

For the equivalence, see \cite[Theorems 1.2 and 1.7]{Fell} and \cite[Lemme 1.23]{Eymard}.



The algebra $\mathscr C(G) dg$ of Harish-Chandra Schwartz measures acts on every tempered representation by convolution. I remind the reader an argument for the convergence of the convolution operator $\Omega(hdg)$, for $h\in \mathscr C(G)$ and $\Omega$ a tempered representation: Writing $h\in \mathscr C(G)$ as a limit of $L^1$-functions $f_n$, we see by \eqref{operatornorms} that the limit of the operators $\Omega(f_ndg)$ exists, because the same is true for the limit of the operators $\rho(f_ndg)$.


We are interested in the action of a ring $\mathfrak z(G)$ of ``multipliers'', which lies in the center of a certain completion of the algebra $\mathscr C(G)dg$. This ring can be identified with the algebra $\mathscr C(\mathcal T)$ of Schwartz functions which are supported on a finite number of connected components of the orbifold $\mathcal T$ of tempered parameters. I outline the definitions: The tempered dual $\widehat{G}^\temp$ has a finite-to-one map onto the orbifold $\mathcal T$ whose points are $G$-conjugacy classes of pairs $(L,\tau)$, where $L$ is a Levi subgroup and $\tau$ is the isomorphism class of an irreducible, unitary, discrete-mod-center representation of $L$; the fiber over $(L,\tau)$ is the set of irreducible representations in the unitarily induced representation $I_P^G(\tau)$, where $P$ is a parabolic with Levi subgroup $L$. We let $\mathscr C(\mathcal T)$ be the space of Schwartz functions (i.e.\ smooth functions of rapid decay) on this orbifold which are supported on a finite number of connected components. (Of course, in the non-Archimedean case the notion of rapid decay is irrelevant, since the components are compact. In this case, the ring of multipliers $\mathfrak z(G)$ that I am describing here is the restriction to a finite number of spectral components of the ``tempered Bernstein center'' of Schneider and Zink \cite{SZ}; cf.\ also \cite{DHS}.) 

There is a ring $\mathfrak z(G) \simeq \mathscr C(\mathcal T)$ (we will denote this isomorphism by $z\mapsto \hat z$) acting on any tempered representation $\Omega$ of $G$. Its action on $K$-finite vectors (where $K$ is some maximal compact subgroup) can be described in terms of the Harish-Chandra Schwartz algebra, and it is then a simple exercise on the Plancherel formula to show that this action extends to $\Omega$ (but we will not use this extension). More precisely, let $\hat z\in \mathscr C(\mathcal T)$ be given. For any fixed $K$-type $\tau$, there is, by the Plancherel formula of \cite{HC, WaPl}, a unique element $z^\tau\in \mathscr C(G) dg$ which acts by the scalar $\hat z(\pi)$ on the $(K,\tau)$-type vectors of $\pi\in\widehat{G}^\temp$, and annihilates all other types. (By abuse of notation, we write $\hat z(\pi)$ for the value of $\hat z$ on the image of $\pi$ in $\mathcal T$.) The formal sum:
\begin{equation}
z = \sum_\tau z^\tau
\end{equation}
over all $K$-types is a well-defined operator on the $K$-finite vectors of any tempered representation $\Omega$, and extends uniquely to an operator on $\Omega$ which commutes with the action of $G$.

Returning to the setting of the proposition, for any $v, w\in \Omega^0$, $h\in \mathscr C(G)$ and $x\in G$ we have:
$$ \left< \Omega(x) \Omega(hdg)  v, w\right> = \int_G h(g) \left<\Omega(xg) v, w\right> dg = (\rho(hdg)F)(x),$$
where $F(x)$ is the matrix coefficient $\left<\Omega(x) v, w\right>$. In other words, the matrix coefficient map from $\Omega\otimes\bar\Omega$ to functions on $G$ intertwines the action of the algebra $\mathscr C(G)dg$ on $\Omega$ with its right regular action on functions on $G$ (which, in particular, is convergent on the image of the matrix coefficient map). 

Since the right regular action of $\mathscr C(G)dg$ preserves the space $\mathscr C(G)$, we may assume that the subspace $\Omega^0$ is stable under the action of $\mathscr C(G)dg$ (by replacing it with its span under the action of this algebra). 

Now we return to \eqref{PlOmega}, setting $F(g) = \left<\Omega(g)\Phi,\Phi\right>$. To prove that the hermitian forms $J_\pi$ are positive semi-definite, it is enough to show it for their restrictions to the set of $K$-finite vectors in $\Omega^0$. Thus, assume that $\Phi\in \Omega^0$ is $K$-finite. If $z \in \mathfrak z(G)$ we get:
$$0\le \left< \Omega(z)\Phi, \Omega(z)\Phi\right> = (\rho(z^* z)F)(1)  $$
(where $z^*$ denotes the adjoint operator, which for $fdg \in \mathscr C(G)dg$ is represented by $f^*dg$, with $f^*(g) = \overline{f(g^{-1})}$)
$$ = \int_{\widehat{G}^\temp} |\hat z(\pi)|^2 F_\pi(1) \mu_G(\pi) \mbox{\,\, (by the Plancherel formula for $\mathscr C(G)$)}$$
$$ = \int_{\widehat{G}^\temp} |\hat z(\pi)|^2 J_\pi(\Phi,\Phi) \mu_G(\pi).$$
 
The Plancherel measure is concentrated on the set $\mathcal T^0$ of elements of $\widehat{G}^\temp$ for which the map to $\mathcal T$ is a bijection; indeed, in a family 
of representations parabolically induced from discrete series, with the discrete series varying by unramified twists, generic elements are irreducible. 
Hence, the above integral is really an integral over the orbifold $\mathcal T$. Moreover, the restriction of $F_\pi(1) = J_\pi(\Phi,\Phi)$ to $\mathcal T^0$ extends to a Schwartz function $t\mapsto J_t(\Phi,\Phi)$ on this orbifold, again by the Plancherel formula for $\mathscr C(G)$. If we had $J_t(\Phi,\Phi)<0$ for some $t \in \mathcal T$, and hence in some neighborhood of $t$, we would get a contradiction by choosing $\hat z$ with support in this neighborhood. Thus, $J_t(\Phi,\Phi)\ge 0$ for all $t$. This is enough to establish the Plancherel formula, but we notice that it also implies that $J_\pi(\Phi,\Phi)\ge 0$ for \emph{every} tempered $\pi$: indeed, if the fiber over $t\in \mathcal T$ contains a finite number of elements $\pi_1 ,\dots, \pi_k$, and $\pi_i$ appears with multiplicity $m_i\ge 1$ in the corresponding induced representation, we have just shown that the sum $\sum_{i=1}^k m_i J_{\pi_i}$ is a positive semi-definite hermitian form, but since  the representations $\pi_i$ are inequivalent, each of the summands has to be positive semi-definite.
 
\end{proof}

Combining this with Proposition \ref{inHCS}, Theorem \ref{localtheorem} follows. We also get as a corollary the following result, which generalizes a theorem of Hongyu He \cite{He}, answering a question of Jian-Shu Li \cite{Li} in the case where $\pi_2$ is tempered:

\begin{corollary}
The hermitian forms $J_\pi^\Planch$ of \eqref{JPlanch} are positive semi-definite, for every tempered representation $\pi_2$ of $G_2$.
\end{corollary}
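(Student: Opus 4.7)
The corollary is an immediate consequence of the general Plancherel-positivity proposition just proven, applied to $\Omega = \oomega$ viewed as a unitary representation of $G = G_2$, with $\Omega^0 = \oomega^\infty$ as the dense subspace. Let me outline the plan.

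First, I would verify that the hypotheses of the general proposition are met in this setting. The key hypothesis is that the matrix coefficients of vectors in $\Omega^0$ lie in the Harish-Chandra Schwartz space. This is precisely the content of Proposition \ref{inHCS}: for $\Phi_1, \Phi_2 \in \oomega^\infty$, the function $g \mapsto \left<\oomega(g)\Phi_1, \Phi_2\right>_\oomega$ restricted to $G_2$ lies in the genuine Harish-Chandra Schwartz space $\mathscr{C}(G_2)$. (The extension of the general proposition from classical groups to the $\CC^1$-covers $G_2$ acting on genuine functions is, as remarked after the proposition, straightforward.)

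Next, I would compare the hermitian forms $J_\pi$ produced by the general proposition with the forms $J_\pi^\Planch$ defined in \eqref{JPlanch}. Unwinding the definitions, the form \eqref{defJpi} applied to $\Omega = \oomega$ and $\pi = \pi_2$ (with the integral running over $G_2/\CC^1$ because we work with genuine functions) is exactly the defining expression \eqref{JPlanch} for $J_\pi^\Planch$. Thus the two forms literally coincide.

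Finally, invoking the general proposition, $J_\pi$ is positive semi-definite on $\Omega^0 = \oomega^\infty$ for every tempered representation $\pi_2$ of $G_2$, which is the desired statement. The ``main obstacle'' for the corollary has already been surmounted in Proposition \ref{inHCS}, namely the interpretation of $\oomega$-matrix coefficients as sections of the induced representation $I_{\PP}^{G(\WW)}(\chi \delta_{\PP}^{-1/2})$ via the doubling method and the resulting Harish-Chandra Schwartz bound obtained from the convergence result of Gan--Ichino; everything else is a direct appeal to the positivity argument already established (via the tempered Bernstein-center-type multipliers $\mathfrak{z}(G)$ acting on matrix coefficients through the regular representation).
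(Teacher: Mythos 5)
Your proof is correct and is precisely the paper's argument: the corollary is obtained by combining the general positivity proposition (applied to $\Omega = \oomega$, $\Omega^0 = \oomega^\infty$, $G = G_2$ in its genuine incarnation) with Proposition \ref{inHCS}, after observing that \eqref{defJpi} specializes to \eqref{JPlanch}. Nothing further to add.
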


\section{Euler factorization} \label{sec:global}

\subsection{Global conjecture and theorem}

With the spaces $V$ and $W$ defined over a global field $k$ (or its quadratic extension $E$, in the unitary case), and the additive adele class character $\psi$, there is a central extension:
\begin{equation}\label{globalcover} 1\to \CC^1 \to \widetilde\Sp(V\otimes W)(\adele) \to \Sp(V\otimes W)(\adele)\to 1,
\end{equation}
equipped with a canonical splitting $\Sp(V\otimes W)(k) \hookrightarrow \widetilde\Sp(V\otimes W)(\adele)$, a genuine, unitary representation $\oomega$ and an automorphic (i.e.\ $\Sp(V\otimes W)(k)$-invariant) functional:
\begin{equation}\label{autfnl}
\mathcal I: \oomega^\infty \to \CC.
\end{equation}

Moreover, we may fix a restricted tensor product decomposition, assumed unitary:
\begin{equation}\label{omegafactor}
\oomega^\infty = \bigotimes'_v \oomega_v^\infty,
\end{equation}
such that the factor $\oomega_v$ is the oscillator representation for $\widetilde\Sp(V\otimes W)(k_v)$, the preimage of $\Sp(V\otimes W)(k_v)$ in \eqref{globalcover}. Choosing a Lagrangian subspace $Y\subset V\otimes W$, the representation $\oomega$ admits a Schr\"odinger model with 
$$ \oomega^\infty = \mathcal S(Y^*(\adele)),$$
the canonical functional $\mathcal I$ being equal to the functional:
\begin{equation}
\mathcal S(Y^*(\adele)) \ni \Phi \mapsto \sum_{\gamma\in Y^*(k)} \Phi(\gamma),
\end{equation}
and the factorization \eqref{omegafactor} can be taken to be an analogous factorization for the Schwartz space $\mathcal S(Y^*(\adele))$, with $\oomega_v$ realized on its Schr\"odinger model on $\mathcal S(Y^*(k_v))$. Measures on adelic vector spaces will always be taken to be Tamagawa measures, and we fix Euler factorizations of those which on $k_v$ or on the $k_v$-points of a symplectic space are self-dual with respect to the local factor $\psi_v$ of the character $\psi$.

We now restrict the global oscillator representation $\oomega$ to the dual pair $G(\adele) = G_1(\adele)\times G_2(\adele)$, where by $G_i(\adele)$ we denote the $\CC^1$-cover obtained by restricting \eqref{globalcover} to the appropriate subgroup of $\Sp(V\otimes W)(\adele)$. The canonical splitting of the cover over $\Sp(V\otimes W)(k)$ induces a splitting of $G_i(\adele)$ over the $k$-points of the corresponding classical group, and we will denote the corresponding subgroup of $G_i(\adele)$ by $G_i(k)$.

Let $[G_i] =G_i(k)\backslash G_i(\adele)$. The space $[G_i]/\CC^1$ is the automorphic quotient of a classical group, and we endow it with Tamagawa measure. We fix a factorization of this measure into measures for the local groups $G_{i,v}/\CC^1$. The notation $L^2([G_i])$ will denote the space of \emph{genuine} functions on $[G_i]$, endowed with the $L^2$-inner product obtained by integrating against Tamagawa measure on $[G_i]/\CC^1$.  Similarly, by ``automorphic representation'' for $G_i$ we will mean, throughout, a \emph{genuine} automorphic representation.

Let $\pi_1,\pi_2$ be genuine, discrete automorphic representations of $G_1$, $G_2$, considered as subspaces of $L^2([G_i])$ (not just as abstract adelic representations) where $[G_i]=G_i(k)\backslash G_i(\adele)$. Set $\pi= \pi_1\otimes\pi_2$, and define the theta pairing:
\begin{equation}
\mathcal P_\pi^\Aut : \bar\pi^\infty \otimes\oomega^\infty\to \CC
\end{equation}
by:
\begin{equation}
 \bar\pi^\infty \otimes\oomega^\infty \ni \overline{(\varphi_1\otimes\varphi_2)}\otimes\Phi \mapsto \int_{[G_1]/\CC^1 \times [G_2]/\CC^1} \overline{\varphi_1(g_1) \varphi_2(g_2)} \mathcal I(\oomega(g_1,g_2)\Phi) d(g_1,g_2),
\end{equation}
whenever the integral above is absolutely convergent. This is of course nothing else than the integral of $\overline{\varphi_1}$ against the theta lift of $\overline{\varphi_2}$ with respect to $\Phi$ (or vice versa): 

\begin{equation}
\mathcal P_\pi^\Aut \left(\overline{(\varphi_1\otimes\varphi_2)}\otimes\Phi \right) = \int_{[G_1]/\CC^1} \overline{\varphi_1(g_1)} \theta^\Aut(\Phi,\overline{\varphi_2})(g_1) dg_1,
\end{equation}
where
\begin{equation}\label{thetaAut}
\theta^\Aut(\Phi,\overline{\varphi_2})(g_1) = \int_{[G_2]/\CC^1} \overline{\varphi_2}(g_2)\mathcal I(\oomega(g_1,g_2)\Phi)  dg_2.
\end{equation}
(The image of the theta lift of elements of $\overline{\pi_2}$ will be denoted by $\theta^\Aut(\pi_2)$, for compatibility with the local notation.) We will eventually work with $\pi_2$ cuspidal, in which case, as we will recall, $\theta^\Aut(\Phi,\overline{\varphi_2})(g_1)$ is in the discrete automorphic spectrum of $[G_1]$ (possibly zero); in particular, the pairings above are defined for every $\pi_1$, and non-zero iff $\pi_1$ is not orthogonal to the image of $\theta^\Aut$.

The pairing $\mathcal P^\Aut_\pi$ is the analog of the following ``period'' functional when $X$ is a quasi-affine homogeneous spherical variety for a group $G$ over $k$:
\begin{equation}
\bar\pi^\infty \otimes \mathcal S(X(\adele)) \ni \varphi \otimes \Phi \mapsto \int_{[G]} \overline{\varphi(g)} \sum_{\gamma\in X(k)} \Phi(\gamma g) dg
\end{equation}
which, when $X(\adele)= H(\adele)\backslash G(\adele)$ unfolds to the more familiar period integrals of automorphic forms in $\bar\pi$ over $[H]$.

The pairing $\mathcal P_\pi^\Aut$ is an element of the space $\Hom_G (\bar\pi^\infty\otimes\oomega^\infty,\CC)$, which is a restricted tensor product of the (at most one-dimensional) spaces $\Hom_{G(k_v)} (\bar\pi^\infty_v\otimes\oomega^\infty_v,\CC)$, after we fix a \emph{unitary} isomorphism with a restricted tensor product of local unitary representations:
\begin{equation} 
\pi^\infty \simeq \bigotimes'_v \pi^\infty_v.
\end{equation}

Our goal is to describe the Euler factorization of $\mathcal P_\pi^\Aut$, or rather of the square of its absolute value, in terms of the local Plancherel formula. It will be more convenient to dualize. Hence, let:
\begin{equation}
J_\pi^\Aut : \oomega^\infty\otimes\bar\oomega^\infty\to \pi\otimes\bar\pi \to\CC
\end{equation}
be given by:
\begin{equation}
J_\pi^\Aut (\Phi_1\otimes\overline{\Phi_2}) = \sum_\varphi \mathcal P^\Aut_\pi (\bar\varphi\otimes\Phi_1) \cdot \overline{\mathcal P^\Aut_\pi (\bar\varphi\otimes\Phi_2)},
\end{equation}
the sum ranging over an orthonormal basis of $\pi$. We will call this the \emph{global}, or \emph{automorphic relative character} of Howe duality.

For most of the groups that we are considering, the notion of global Arthur parameters is meaningful, by the work of Arthur and others \cite{Abook,Mok,KMSW}. In particular, we know that the discrete automorphic specturm of $G_1$ and $G_2$ is partitioned in ``packets'' parametrized by ``equivalence classes of global Arthur parameters'' which are symbolically written:
\begin{equation}\psi: \mathcal L_k\times \SL_2 \to \LG_i,\end{equation}
(where $\mathcal L_k$ is supposed to denote the ``global Langlands group'' of the field, an extension of the global Weil group) and whose real meaning is explained in terms of automorphic representations of general linear groups in \cite{Abook}[\S 1]. We will assume this parametrization to be known in all cases, which is work in progress, cf.\ \cite{WWL}, in order to formulate our global conjecture.

\begin{remark}
Of course, our groups are non-standard, and to identify them with classical groups or double metaplectic covers thereof, we need to choose idele class characters as in the local discussion of \S \ref{ssmetaplectic}. It is clear that, once the Arthur parametrization is known for the corresponding classical or metaplectic group, it gives rise in a \emph{canonical} way to an analogous parametrization of genuine automorphic representations for our $\CC^1$-covering group in terms of parameters into its non-standard $L$-group that we attached in \S \ref{ssLgroups}. (For metaplectic double covers, we recall that this parametrization is only canonical given the choice of additive character $\psi$.)
\end{remark}

We will start by formulating a global conjecture, the analog of \cite{SV}[Con\-jecture 17.4.1].  
We will restrict ourselves to the ``most tempered'' part of the distinguished spectrum, which should correspond to discrete ``global Arthur parameters'' of the form \eqref{composition} (with the Weil-Deligne group replaced by $\mathcal L_k$). Thus, call a global Arthur parameter for $G=G_1\times G_2$ ``$\oomega$-distinguished'' if it is of the form:

\begin{equation}\label{composition-global} \mathcal L_k \times \SL_2 \xrightarrow{\phi \times \Id} \LG_\oomega \times \SL_2 \xrightarrow{\gamma} {^L(G_1 \times G_2)},\end{equation}
where the canonical map $\gamma$ is that of \eqref{canonicalmorphism}, and $\phi$ is discrete (i.e.\ does not ``factor through a Levi subgroup'' -- again, this has to be understood as in Arthur's book).


Attached to such a parameter, there is (by the aforementioned work of Arthur and others, some in progress) a canonical subspace $\mathcal A_\phi$ of $L^2([G])$. For the groups under consideration, this space is multiplicity-free, i.e.\ every irreducible adelic representation in this space appears with multipliplicity one. 
Notice that for the (disconnected) even orthogonal group this is not explicitly stated in \cite{Abook}, but can be inferred from Theorem 1.5.2 in that monograph. It has also been established by means of the theta correspondence by Atobe-Gan \cite{AG-orthogonal}.

\begin{conjecture}
Consider an $\oomega$-distinguished global Arthur parameter induced from a discrete parameter $\phi:\mathcal L_k\to \LG_2$ as above. Let $\pi=\pi_1\otimes\pi_2\subset \mathcal A_\phi$, and assume (as expected by the generalized Ramanujan conjecture) that $\pi_2$ is tempered. Then there is a rational number $q$ such that:
\begin{equation}\label{Euler} J_\pi^\Aut = q \prod_v^* J_{\pi_v}^\Planch. \end{equation}
Here $\pi_v = \pi_{1,v} \otimes \pi_{2,v}$ with $\pi_{1,v}$ the theta lift of $\pi_{2,v}$, and $J_{\pi_v}^\Planch$ denotes the hermitian forms  of the local Plancherel formula of Theorem \ref{localtheorem} (possibly zero). The regularized Euler product is understood as follows: For almost every place, the Euler factor will be equal to a local unramified $L$-factor:
\begin{equation}
\frac{L_v(s_0 + \frac{1}{2} , \pi_v)}{d_v (s_0)},
\end{equation}
where ${d_v (s)}$ is an explicit product of Hecke $L$-factors and $s_0=\frac{m-d(n)}{2}\ge 0$ as in \eqref{s0}. We thus understand the Euler product as the partial $L$-value:
\begin{equation}\label{partialL}
\frac{L^S(s_0 + \frac{1}{2} , \pi)}{d^S (s_0)},
\end{equation}
where $S$ is a sufficiently large set finite set of places, times the above product over $v\in S$.
\end{conjecture}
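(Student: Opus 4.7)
The plan is to reduce the Euler factorization to the Rallis inner product formula, in the form developed by Gan--Qiu--Takeda \cite{GQT} and Yamana \cite{Yamana} for the boundary and second term ranges, and then to identify the resulting local zeta integrals with the local Plancherel forms $J_{\pi_v}^\Planch$ of Theorem \ref{localtheorem}. The first step is to unfold $J_\pi^\Aut$ into a sum of Petersson pairings of theta lifts: writing an orthonormal basis of $\pi = \pi_1 \otimes \pi_2$ in the form $\varphi_1 \otimes \varphi_2$ and using Parseval on the sum over $\varphi_1$ (which ranges over an orthonormal basis of $\pi_1 = \theta^\Aut(\pi_2)$), one obtains
$$ J_\pi^\Aut(\Phi_1 \otimes \overline{\Phi_2}) = \sum_{\varphi_2} \left\langle \theta^\Aut(\Phi_1,\overline{\varphi_2}),\ \theta^\Aut(\Phi_2,\overline{\varphi_2})\right\rangle_{[G_1]/\CC^1}, $$
the sum running over an orthonormal basis of $\pi_2$.

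Next I would invoke the see-saw identity relating the dual pairs $(G(V),\, G(W)\times G(W))$ and $(G(V)\times G(V),\, G(W))$, combined with the regularized Siegel--Weil formula for the doubled group $G(\mathbf V) = G(V \oplus (-V))$. This rewrites each inner Petersson pairing, up to an explicit rational constant $q$ arising from the regularization of a degenerate Eisenstein series $E(s,f)$ at the point $s = s_0 = (m-d(n))/2$, as a global doubling zeta integral
$$ Z^\Aut\!\left(s_0,\, f_{\Phi_1\otimes\overline{\Phi_2}},\, \varphi_2\right) = \int_{[G_2]/\CC^1} E\!\left(s_0,\, f_{\Phi_1\otimes\overline{\Phi_2}}\right)\!\bigl((g_2,g_2)\bigr)\,\overline{\left\langle \pi_2(g_2)\varphi_2,\varphi_2\right\rangle}\, dg_2, $$
where $f_{\Phi_1\otimes\overline{\Phi_2}}$ is the section of the degenerate principal series induced from the Siegel parabolic of $G(\mathbf V)$ produced from $\Phi_1\otimes\overline{\Phi_2}$ via the $\widetilde\Sp$-equivariant map \eqref{toinduced} applied to the doubling $\oomega\hat\otimes\bar\oomega \xrightarrow\sim \tilde\oomega$ of \eqref{double}.

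For $\mathrm{Re}(s)$ large this global integral unfolds to a product of local doubling zeta integrals; continued meromorphically to $s = s_0$ (and passing to appropriate Laurent coefficients in the second term range), one obtains, following \cite{GQT,Yamana},
$$ Z^\Aut\!\left(s_0, f_{\Phi_1\otimes\overline{\Phi_2}}, \varphi_2\right) = \frac{L^S(s_0+\tfrac12,\pi)}{d^S(s_0)} \cdot \prod_{v\in S} Z_v\!\left(s_0, f_v, \varphi_{2,v}\right), $$
with $d^S$ the explicit product of Hecke $L$-factors absorbing the normalizing factor of the Eisenstein series, and $Z_v$ the local doubling zeta integral \eqref{localzeta}. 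The last and decisive point is local: by Lemma \ref{ev0} together with the canonical comparison of Schr\"odinger models in \S\ref{sscomparison}, the local doubling integral $Z_v\!\left(s_0, f_{\Phi_{1,v}\otimes\overline{\Phi_{2,v}}}, \varphi_{2,v}\right)$, once summed over an orthonormal basis of $\pi_{2,v}$, is \emph{by construction} the hermitian form \eqref{JPlanch} that defines $J_{\pi_v}^\Planch(\Phi_{1,v}\otimes\overline{\Phi_{2,v}})$.

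The main obstacle is the regularization of the degenerate Eisenstein series at $s_0$: in the boundary case $s_0 = 0$ and throughout the second term range $E(s,f)$ has a pole, and the Siegel--Weil identity can only be stated at the level of Laurent expansions. Correctly extracting the rational constant $q$, and showing that the ``spurious'' Laurent terms contribute nothing to $J_\pi^\Aut$ under the discreteness hypothesis on the Arthur parameter $\phi$, is the technical core of the argument, and this is where the detailed case-by-case analysis in \cite{GQT,Yamana} for the various ranges of $(m,n)$ enters essentially. A secondary bookkeeping issue is compatibility of the Tamagawa measure on $[G_2]/\CC^1$ with the local Plancherel measures $\mu_{G_{2,v}}$ of Theorem \ref{localtheorem} and with the self-dual Haar measures on Lagrangians that underlie the canonical isomorphisms \eqref{Schr-diagonal} and \eqref{double}; this has to be traced through carefully in order to recover \emph{exactly} the Plancherel form $J_{\pi_v}^\Planch$, rather than a scalar multiple.
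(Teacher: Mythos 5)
Your outline closely tracks how the paper proves Theorem \ref{globaltheorem}, which is the \emph{cuspidal special case} of this conjecture, but you should be clear about what this actually establishes. The conjecture allows $\pi_1\otimes\pi_2\subset\mathcal A_\phi$ with $\phi$ discrete, so $\pi_1,\pi_2$ are merely discrete automorphic. Theorem \ref{globaltheorem} and the Rallis inner product formulas of \cite{GQT,Yamana} on which it rests all require both $\pi_2$ and $\theta^\Aut(\pi_2)$ to be \emph{cuspidal} (with the additional exclusion of the lift from $\Sp_n$ to split $O_{2(n+1)}$), and the paper flags in a remark after Theorem \ref{globaltheorem} that cuspidality of $\theta^\Aut(\pi_2)$ happens only once per Witt tower and that the general discrete case is left open. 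Your claim that the ``spurious Laurent terms contribute nothing to $J_\pi^\Aut$ under the discreteness hypothesis on $\phi$'' is precisely the unproved part; \cite{GQT,Yamana} carry out the Laurent analysis under cuspidality, and extending past it is not a bookkeeping matter. One also needs a global version of Adams' conjecture (that $\theta^\Aut(\pi_2)\subset\mathcal A_{\phi,1}$) in order to pass from Theorem \ref{globaltheorem} back to the conjecture as formulated, which you do not mention.

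A second, more concrete error: you have the doubling in the wrong variable. To compute the Petersson pairing $\left\langle\theta^\Aut(\overline{\varphi_2},\Phi_1),\theta^\Aut(\overline{\varphi_2},\Phi_2)\right\rangle_{L^2[G_1]}$, which is an integral over $[G(V)]$, one must double $W$, not $V$: the Siegel--Weil argument is on $G(\WW)$ with $\WW=W\oplus(-W)$, relating the integral of the theta kernel over $[G(V)]$ to a degenerate Eisenstein series on $G(\WW)$ attached to the Siegel parabolic stabilizing the diagonal Lagrangian $W^\diag$. Correspondingly, the pairs you wrote, $(G(V), G(W)\times G(W))$ and $(G(V)\times G(V), G(W))$, are not dual pairs in any ambient symplectic group; the relevant see-saw is between $(G(V), G(\WW))$ and $(G(V)\times G(V), G(W)\times G(-W))$, with $G(V)$ embedded diagonally and $G(W)\times G(-W)\subset G(\WW)$. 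Your global zeta integral with argument $(g_2,g_2)$ is also not the standard doubling integral, which lives on $[G_2\times G_2]$ and unfolds to local integrals of the form \eqref{localzeta2} with argument $(g,1)$. The last step --- the identification via Lemma \ref{ev0} of $\sum_{\varphi_{2,v}} Z_v(\varphi_{2,v},\varphi_{2,v}, f_{\Phi_{1,v}\otimes\overline{\Phi_{2,v}}})$ with the Plancherel form $J_{\pi_v}^\Planch$ of \eqref{JPlanch} --- is correct and is exactly how the paper closes the argument.
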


\begin{remark}
The $L$-function appearing in the numerator is the ``standard $L$-function'' for parameters into our non-standard $L$-groups of \S \ref{ssLgroups}. It is defined as in the ``standard'' case: When the $L$-group is a subgroup of $O(\CC)\times \Gal(\bar k/k)$ or $\Sp(\CC)\times\Gal(\bar k/k)$ (we omit the dimension from the notation), the standard representation is the standard representation of $O(\CC)$, resp.\ $\Sp(\CC)$. In the unitary case, where the connected component of the $L$-group is $\GL(\CC)$, the standard representation is obtained by base change from $k$ to $E$. The reader can easily see that our non-standard $L$-groups spare us the need to include the character $\chi_V$ in formulas as in \cite{GQT}. The abelian $L$-factor $d_v(s_0)$ appears in the evaluation of local zeta integrals, cf.\ \cite[\S 11.6]{GQT} and has been computed in \cite[p.334, Remark 3]{LR}. I leave it to the reader to explicate the precise meaning of the abelian factors ${d_v (s_0)}$ in our setting, since this is somewhat orthogonal to the goal of this paper.
\end{remark}

Let $\mathcal A_{\phi,i}$ be the projection of $\mathcal A_\phi$ to $L^2([G_i])$. If we assume that the global theta lift of an irreducible $\pi_2\subset \mathcal A_{\phi,2}$ lies in $\mathcal A_{\phi,1}$, which is the global version of Adams' conjecture, and restrict our attention to the case where both $\pi_2$ and $\theta^\Aut(\pi_2)$ are (zero or) cuspidal, the conjecture above is a consequence of the following:

\begin{theorem}\label{globaltheorem} 
Assume that $\pi = \theta^\Aut(\pi_2) \otimes \pi_2$ is cuspidal (or zero), with $\pi_2$ tempered. Exclude the case when $W$ is symplectic of dimension $n$ and $V$ is \emph{split} orthogonal of dimension $2(n+1)$. Then $\pi =0$ if and only if the right hand side of \eqref{Euler} is zero, and in any case formula \eqref{Euler} is true on the space of $\pi$, with $q=[E:k]$.
\end{theorem}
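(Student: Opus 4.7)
The strategy is to reduce the identity to the Rallis inner product formula of \cite{GQT,Yamana}, and to reinterpret the local zeta factors appearing there as the Plancherel hermitian forms of Theorem \ref{localtheorem}.

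\textbf{Step 1: Unfolding.} Fix orthonormal bases $\{\varphi_{1,i}\}$ and $\{\varphi_{2,j}\}$ of $\pi_1 = \theta^\Aut(\pi_2)$ and $\pi_2$, with the latter built from local orthonormal bases of $\pi_{2,v}$ so as to consist of pure tensors outside a finite set $S$ of places containing all ramification. Because $\pi_1$ is cuspidal,
$\mathcal P^\Aut_\pi(\overline{\varphi_{1,i}\otimes\varphi_{2,j}} \otimes \Phi)
= \langle \theta^\Aut(\Phi, \overline{\varphi_{2,j}}), \varphi_{1,i}\rangle_{L^2([G_1]/\CC^1)}$,
and Parseval on $\pi_1$ yields
$$ J^\Aut_\pi(\Phi_1 \otimes \overline{\Phi_2}) = \sum_j \bigl\langle \theta^\Aut(\Phi_1, \overline{\varphi_{2,j}}),\ \theta^\Aut(\Phi_2, \overline{\varphi_{2,j}})\bigr\rangle_{\pi_1}.$$

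\textbf{Step 2: Rallis inner product formula.} Under the hypotheses of the theorem -- and in particular outside the excluded symplectic/split-orthogonal ``first term range'' -- the results of \cite{GQT,Yamana} identify each summand with a doubling zeta integral:
$$ \bigl\langle \theta^\Aut(\Phi_1, \overline{\varphi_{2,j}}),\ \theta^\Aut(\Phi_2, \overline{\varphi_{2,j}})\bigr\rangle = [E{:}k]\cdot \int_{[G_2]/\CC^1} \mathcal E(s_0, g; f_{\Phi_1\otimes\overline{\Phi_2}}) \overline{\langle \pi_2(g)\varphi_{2,j}, \varphi_{2,j}\rangle}\, dg,$$
where $\mathcal E$ is the Siegel--Weil Eisenstein series on $G(\WW)$, $\WW = W \oplus (-W)$, and $f_{\Phi_1\otimes\overline{\Phi_2}}$ is the section of $I_\PP^{G(\WW)}(\chi\delta_\PP^{-1/2})$ attached to $\Phi_1\otimes\overline{\Phi_2}$ via \eqref{toinduced}. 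The constant $[E:k]$ is the one produced by Rallis' formula in the ``second term''/``boundary'' range, and the equivalence of vanishing between the two sides encodes the non-vanishing dichotomy stated in the theorem.

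\textbf{Step 3: Euler factorization and matching with $J^\Planch_{\pi_v}$.} Unfolding the Eisenstein series against the cuspidal matrix coefficient \cite[\S 11]{GQT} factorizes the global integral as
$\frac{L^S(s_0 + \frac{1}{2}, \pi)}{d^S(s_0)} \prod_{v\in S} Z_v(\Phi_{1,v}, \overline{\Phi_{2,v}}, \varphi^{i_v}_{2,v})$
with $Z_v$ the local doubling zeta integral \eqref{localzeta}. By Lemma \ref{ev0} and the canonical identifications of Schr\"odinger models in \S \ref{sscomparison}, the value $f_{\Phi_1\otimes\overline{\Phi_2}}((g,1))$ equals the matrix coefficient $\langle \oomega_v(g)\Phi_{1,v}, \Phi_{2,v}\rangle_{\oomega_v}$; hence, summing $Z_v$ over a local orthonormal basis of $\pi_{2,v}$ produces $J^\Planch_{\pi_v}(\Phi_{1,v}\otimes\overline{\Phi_{2,v}})$ by definition \eqref{JPlanch}. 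Interchanging the sum over $j=(i_v)_{v\in S}$ with the product over $v\in S$ -- legitimate thanks to the Harish-Chandra Schwartz estimates of Proposition \ref{inHCS} and the pointwise Plancherel decomposition of Theorem \ref{localtheorem} -- assembles the right hand side of \eqref{Euler} with $q=[E:k]$.

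\textbf{Principal obstacle.} The delicate point is to verify that the constant produced by the Rallis formula is precisely $[E:k]$ (and not a power of $2$ times it) under our normalization conventions: Tamagawa measures, the canonical half-density models of \S\ref{sscomparison}, and the genuine-representation/non-standard-$L$-group setup of \S\ref{ssLgroups}. This requires a careful audit of the constants in \cite{GQT,Yamana} translated through Kudla's cocycle, and cross-checking the unramified local computation of $Z_v$ against the unramified factor $L_v(s_0+\tfrac12, \pi_v)/d_v(s_0)$ in the regularization of the Euler product.
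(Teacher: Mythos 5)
Your proposal reproduces the paper's argument: identify $J^\Aut_\pi$ via Parseval with $\sum_{\varphi_2}\langle\theta^\Aut(\overline{\varphi_2},\Phi_1),\theta^\Aut(\overline{\varphi_2},\Phi_2)\rangle$, invoke the Rallis inner product formula from \cite{GQT,Yamana} (packaged in the paper as Theorem \ref{RIPtheorem}) to Euler-factor each summand into local doubling zeta integrals, and then use Lemma \ref{ev0} together with \eqref{PlanchZeta} to recognize the local sums over orthonormal bases as $J^{\Planch}_{\pi_v}$. Two small remarks: the intermediate display in your Step 2 has a type error (the ``basic identity'' produces an integral over $G_{2}(\adele)/\CC^1$ of the section $f_{\Phi_1\otimes\bar\Phi_2}$ restricted to $(g,1)$ against the global matrix coefficient, not an integral of the Eisenstein series over $[G_2]/\CC^1$), and your ``principal obstacle'' about auditing the constant $q=[E:k]$ is dissolved in the paper simply by citing \cite[Theorem 11.4]{GQT} and \cite[Lemma 10.1]{Yamana}, which already yield the factored RIP formula with that constant under the normalizations of \S\ref{sscomparison}.
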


The statement is just a reinterpretation of known versions of the Rallis inner product formula, as we will see in the next section.

\begin{remarks}\begin{enumerate}
\item
The statement means that $\theta^\Aut(\pi_2)$ is zero if and only if the partial $L$-function is zero at $s_0$, or some of the local factors $J_{\pi_v}^\Planch$ are zero.

\item
Temperedness ensures that the local Euler factors $J_{\pi_v}^\Planch$ are well-defined. These factors also turn out to be meromorphic in the parameters of the representation, and the formulas of \cite{GQT, Yamana} extend the above result off the tempered spectrum of $G_2$. 

\item The condition that $\theta^\Aut(\pi_2)$ is cuspidal is pretty restrictive: it only happens once in every Witt tower. It would be desirable to verify the conjecture also in the case where $\pi_2$ is discrete but not cuspidal. I do not do this in this paper.
\end{enumerate}
\end{remarks}

The goal of the rest of this section will be to prove this theorem.

\subsection{Reinterpretation of the Rallis inner product formula}

\subsubsection{Different ranges for the theta correspondence}

For the purposes of applying the Siegel-Weil theorem and obtaining the Rallis inner product formula, one distinguishes the following cases for a dual pair $(G_1, G_2)$ as in Table \eqref{table} -- without, here, necessarily assuming that $m\ge d(n)$:

First of all, let $r$ denote the Witt index of $V$ (the dimension of its maximal isotropic subspace). 

\begin{itemize}
\item When $r=0$ or $m-r > d(n)$, we are in the \emph{convergent range}.

\medskip

Assume now that this is not the case.

\item The case $m<d(n)$ is the \emph{first term range}.

\item The case $m=d(n)$ is the \emph{boundary case}.

\item The case $d(n)<m\le 2d(n)$ is the \emph{second term range}.
\end{itemize}

Having assumed that $m\ge d(n)$ we are either in the boundary or second term range, or in the convergent range.
For the second term range, we also need to impose an extra condition imposed by Kudla-Rallis: 
\begin{equation}
r\le n.
\end{equation}
The only case we are excluding by this condition is when $\epsilon =1$, $m=2d(n)$ and $V$ is split, i.e.\ the lift from $G_2=\Sp_n$ to the split $G_1=O_{2(n+1)}$. It is a technical condition that should not be necessary, but needs different techniques, see the discussion in \cite[\S 3.5]{GQT}. 

\subsubsection{Statement of the Rallis inner product formula}

Denote, as before, by $\WW$ the $(-\epsilon)$-hermitian space $W\oplus (-W)$, and by $\tilde\oomega$ the oscillator representation of the adelic covering group $\widetilde\Sp(V\otimes\WW)(\adele)$. By the isomorphism \eqref{double}, any pair of vectors $\Phi_1,\Phi_2\in\oomega$ give rise to a vector in $\tilde\oomega$, which we will denote just by $\Phi_1\otimes \overline{\Phi_2}$. 

Recall also the morphism \eqref{toinduced}, from the oscillator representation attached to $V\otimes\WW$ to a degenerate principal series. This morphism was obtained by realizing the local oscillator representation $\tilde\oomega_v$ on the space $L^2(\ell^*(k_v))$, where $\ell$ was the Lagrangian subspace $V\otimes W$, embedded diagonally. In \S \ref{sscomparison} we described ways to achieve this realization, by choosing a Haar measure on $\ell^*(k_v)$ and a splitting of the natural map $V\otimes\WW \to V\otimes\WW/\ell = \ell^*$, and we described canonical local choices for those data. These choices do not matter globally (any global choice of splitting, together with Tamagawa Haar measure, will produce the same functional), but we work with them throughout in order to fix an identification of the local factors of $\tilde\oomega$ as in \eqref{omegafactor} with $L^2(\ell^*(k_v))$, and a factorization of the global functional $\ev_0$ with the corresponding product of local functionals.

We will denote the image of a vector $\Phi \in \tilde\oomega^\infty$ by $f_\Phi \in \Ind_{\SS(\adele)}^{\widetilde\Sp(V\otimes\WW)(\adele)}(\chi)$, and based on the isomorphisms just recalled it comes with an Euler factorization, induced from \eqref{omegafactor}:
\begin{equation}
f_\Phi = \bigotimes_v f_{\Phi_v}.
\end{equation}

\begin{theorem} \label{RIPtheorem}
Suppose that $m\ge d(n)$ and $r\le n$. 

Let $\pi_2$ be a cuspidal, genuine automorphic representation of $G_2$ and consider the global theta lift $\pi_1=\theta^\Aut(\pi_2)$  to $G_1$. Then:

\begin{enumerate}
\item $\pi_1$, if non-zero, is discrete (i.e., it lies in $L^2([G_1])$). It is cuspidal if and only if $V$ is anisotropic, or the global theta lift to $G_1^{-2}$ is zero, where $G_1^{-2}$ is constructed like $G_1$, replacing $V$ by an $\epsilon$-hermitian space $V'$ such that $V$ is the orthogonal sum of $V'$ and a split two-dimensional $\epsilon$-hermitian subspace.
\item If $\pi_1$ is cuspidal then for $\varphi_1, \varphi_2\in \pi_2$ we have: 
\begin{equation}\label{RIP}
\left<\theta^\Aut(\overline{\varphi_1},\Phi_1),\theta^\Aut(\overline{\varphi_2},\Phi_2)\right>_{L^2[G_1]} = [E:k] \prod^*_v Z_v (\varphi_{1,v}, \varphi_{2,v}, f_{\Phi_{1,v}\otimes\overline{\Phi_{2,v}}})
\end{equation} where $s_0=\frac{m-d(n)}{2}$, and $Z_v$ is the local zeta integral \eqref{localzeta} of the doubling method:
\begin{equation}\label{localzeta2}
Z_v (\varphi_{1,v}, \varphi_{2,v}, f_{\Phi_v}) = \int_{G_{2,v}/\CC^1} f_{\Phi_v}(g, 1) \overline{\left<  \pi_v(g)\varphi_{1,v}, \varphi_{2,v}\right>} dg.
\end{equation}
\end{enumerate}
\end{theorem}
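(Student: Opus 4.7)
The plan is to reduce the theorem to the regularized Siegel--Weil formula and the doubling method of Piatetski-Shapiro--Rallis, invoking the extensions by Ichino, Yamana, and Gan--Qiu--Takeda. The first step is to unfold. By definition of the theta lift via \eqref{thetaAut}, the left-hand side of \eqref{RIP} becomes the triple integral
\begin{equation*}
\int_{[G_1]/\CC^1}\int_{[G_2]/\CC^1}\int_{[G_2]/\CC^1} \overline{\varphi_1(g_2)}\,\varphi_2(g_2')\, \mathcal I(\oomega(g_1,g_2)\Phi_1)\, \overline{\mathcal I(\oomega(g_1,g_2')\Phi_2)}\, d g_2\, d g_2'\, d g_1,
\end{equation*}
and the canonical isomorphism \eqref{double} identifies the product of theta kernels with a single theta kernel $\tilde{\mathcal I}(\tilde\oomega(g_1,(g_2,g_2'))(\Phi_1\otimes\overline{\Phi_2}))$ attached to the doubled space $V\otimes\WW$. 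After switching the order of integration (justified when $\pi_2$ is cuspidal so the outer integral over $[G_2]\times[G_2]$ is rapidly decaying), the inner integral over $[G_1]/\CC^1$ is precisely the Siegel--Weil integral of Kudla--Rallis.

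The second step invokes the Siegel--Weil formula in its extended form. Under our assumptions ($m \ge d(n)$ and the Witt index condition $r \le n$), the results of \cite{GQT, Yamana} in the boundary and second term ranges identify this inner integral with the value, at $s = s_0$, of a Siegel Eisenstein series $E(\cdot, s, f_{\Phi_1\otimes\overline{\Phi_2}})$ on $G(\WW)(\adele)$ induced from $f_{\Phi_1\otimes\overline{\Phi_2}}$ (possibly via an appropriate regularization operator in the second term range), up to the constant $[E:k]$. The excluded case $V$ split orthogonal of dimension $2(n+1)$ is exactly the case where the Witt index condition fails, so this technical restriction is inherited from the Siegel--Weil input. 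Substituting back yields a global doubling zeta integral of the form
\begin{equation*}
[E:k]\int_{[G_2\times G_2]/(\CC^1)^2} E(g_2, g_2', s_0, f_{\Phi_1\otimes\overline{\Phi_2}})\, \overline{\langle \pi_2(g_2)\varphi_1, \pi_2(g_2')\varphi_2\rangle}\, d(g_2,g_2').
\end{equation*}

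The third step is Piatetski-Shapiro--Rallis unfolding against the open orbit of $G_2\times G_2$ on the Siegel flag variety of $G(\WW)$. Cuspidality of $\pi_2$ kills the contributions of all non-open orbits, and the resulting single adelic integral factorizes into the Euler product of local zeta integrals \eqref{localzeta2} by uniqueness of the spherical section; here the Euler factorization of $\Phi_1\otimes\overline{\Phi_2}$ is the canonical one coming from \eqref{omegafactor} via the identifications of \S \ref{sscomparison}, ensuring that the local factors are exactly $Z_v$ in \eqref{localzeta2} without extraneous constants. This gives \eqref{RIP}.

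For part (1), the discreteness of $\pi_1$ when non-zero, together with the cuspidality criterion in terms of vanishing of the lift to $G_1^{-2}$, is the classical tower property of Rallis (and Kudla--Rallis in the non-convergent range): one computes the constant term of $\theta^\Aut(\bar\varphi, \Phi)$ along a maximal parabolic and identifies it, via the mixed model of the Weil representation, with a theta lift from $\pi_2$ to a smaller isometry group in the same Witt tower. The main obstacle in the whole argument is the second step: the naive Siegel--Weil integral diverges in the second term range, and matching the normalizations so that the constant is exactly $[E:k]$ requires carefully tracking the Tamagawa measure conventions through the regularization of \cite{GQT} and the canonical choices of \S \ref{sscomparison}; this is the point where one must be most careful, since all of the analytic content of the Rallis inner product formula is concentrated there.
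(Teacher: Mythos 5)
Your outline is a correct sketch of the argument carried out in the references \cite{GQT,Yamana}, but the paper does not re-derive the Rallis inner product formula: its proof of Theorem~\ref{RIPtheorem} consists only of citations, namely \cite{Rallis-Howe} and \cite[Proposition~10.1]{Yamana} for part (1) (where condition (4) of that proposition is what requires $m\ge d(n)$), and \cite[Theorem~11.4]{GQT} (together with \cite[Lemma~10.1]{Yamana} for the boundary case) for part (2). Since the goal of this section is to \emph{reinterpret} the known formula in terms of the local Plancherel forms of Theorem~\ref{localtheorem} rather than to re-prove it, your longer sketch --- doubling the theta kernel via \eqref{double}, applying the regularized Siegel--Weil formula to convert the $[G_1]$-integral into a Siegel Eisenstein series value at $s_0$, and then unfolding the doubling integral against the open $G_2\times G_2$-orbit --- is precisely the content of those cited results and is consistent with what the paper takes as given.
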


\begin{proof}
For the discreteness statement, see \cite{Rallis-Howe}; also \cite[Proposition 10.1]{Yamana} -- notice that condition (4) of this proposition is satisfied because we are assuming $m\ge d(n)$.

Under the assumption that $\pi_1$ is cuspidal, \eqref{RIP} is \cite[Theorem 11.4]{GQT}, except for the boundary case where I point the reader to \cite[Lemma 10.1]{Yamana}. 
\end{proof}

\subsubsection{Reinterpretation in terms of the Plancherel formula}

We will now reinterpret the Rallis inner product formula to prove Theorem \ref{globaltheorem}.

By Lemma \ref{ev0}, we have $f_{\Phi_{1,v}\otimes\overline{\Phi_{2,v}}} (g,1) = \left<\oomega(g) \Phi_{1,v},\Phi_{2,v}\right>_\oomega$ for $g\in G_2(\adele)$. Hence, the local zeta integral \eqref{localzeta2}, for $\Phi_v = \Phi_{1,v}\otimes\overline{\Phi_{2,v}}$ can be written:
\begin{equation}
Z_v (\varphi_{1,v}, \varphi_{2,v}, f_{\Phi_{1,v}\otimes\overline{\Phi_{2,v}}}) =  \int_{G_{2,v}/\CC^1} \left<\oomega(g) \Phi_{1,v},\Phi_{2,v}\right>_\oomega \overline{\left<  \pi_v(g)\varphi_{1,v}, \varphi_{2,v}\right>} dg.
\end{equation}

By Theorem \ref{localtheorem}, these are the morphisms of the local Plancherel formula with measure $\mu_{G_{2,v}}$, the Plancherel measure for $G_{2,v}$. More precisely, in the notation used here:
\begin{equation} \label{PlanchZeta}
J_{\pi_v}^\Planch (\Phi_{1,v}\otimes\overline{\Phi_{2,v})} = \sum_{\varphi_{2,v}} Z_v (\varphi_{2,v}, \varphi_{2,v}, f_{\Phi_{1,v}\otimes\overline{\Phi_{2,v}}}),
\end{equation}
the sum ranging over an orthonormal basis for $\pi_{2,v}$.

Thus, 
$$J_\pi^\Aut (\Phi_1\otimes\overline{\Phi_2}) = \sum_\varphi \mathcal P^\Aut_\pi (\bar\varphi\otimes\Phi_1) \cdot \overline{\mathcal P^\Aut_\pi (\bar\varphi\otimes\Phi_2)}$$
(sum over $\varphi = \varphi_1 \otimes \varphi_2$ in an orthonormal basis of $\pi = \pi_1 \otimes\pi_2$)
$$ = \sum_{\varphi_1 \otimes \varphi_2}  \int_{[G_1]/\CC^1} \overline{\varphi_1(g_1)} \theta^\Aut(\Phi_1,\overline{\varphi_2})(g_1) dg_1 \cdot \overline{ \int_{[G_1]/\CC^1} \overline{\varphi_1(g_1)} \theta^\Aut(\Phi_2,\overline{\varphi_2})(g_1) dg_1,} =$$
$$ = \sum_{\varphi_2} \left<\theta^\Aut(\overline{\varphi_2},\Phi_1),\theta^\Aut(\overline{\varphi_2},\Phi_2)\right>_{L^2[G_1]} =$$ 
$$ = [E:k] \prod_v^* \sum_{\varphi_{2,v}} Z_v (\varphi_{2,v}, \varphi_{2,v}, f_{\Phi_{1,v}\otimes\overline{\Phi_{2,v}}}) \mbox{ \, \, (by Theorem \ref{RIPtheorem})}$$
$$ = [E:k] \prod_v^* J_{\pi_v}^\Planch (\Phi_{1,v}\otimes\overline{\Phi_{2,v})} \mbox{ \,\, (by \eqref{PlanchZeta})},$$
as asserted in Theorem \ref{globaltheorem}.

\bibliographystyle{alphaurl}
\bibliography{howe}

\end{document}